\documentclass[letterpaper, 10 pt, conference]{ieeeconf}  

\IEEEoverridecommandlockouts                              

\usepackage{adjustbox}
\usepackage{algorithm}
\usepackage{algpseudocode}
\usepackage{amsfonts}
\usepackage{amsmath}
\usepackage{amsthm}
\usepackage{amssymb}
\usepackage{bbm}
\usepackage{bm}
\usepackage{booktabs}
\usepackage{cite}
\usepackage{color, colortbl}
\usepackage{diagbox}
\usepackage[hidelinks]{hyperref}
\usepackage{cleveref}
\usepackage{makecell}
\usepackage{mathrsfs}
\usepackage{mathtools}
\usepackage{multirow, multicol}
\usepackage{slashbox}
\usepackage{threeparttable}
\usepackage[normalem]{ulem}
\usepackage{xcolor}
\usepackage{comment}
\usepackage{graphicx}
\usepackage{subcaption} 
\usepackage{xcolor}
\usepackage{stfloats}

\algrenewcommand\textproc{}
\newcolumntype{M}[1]{>{\centering\arraybackslash}m{#1}}
\newcolumntype{C}[1]{>{\centering\arraybackslash}m{#1}}

\theoremstyle{plain}
\newtheorem{lemma}{\textbf{Lemma}}

\newtheorem{theorem}{\textbf{Theorem}}

\newtheorem{assumption}{Assumption}

\theoremstyle{definition}
\newtheorem{definition}{\textbf{Definition}}
\newtheorem{problem}{\textbf{Problem}}
\newtheorem{remark}{Remark}
\newtheorem{example}{\textbf{Example}}
\useunder{\uline}{\ul}{}

\renewcommand{\baselinestretch}{.91}

\title{\LARGE \bf Reach-Avoid Model Predictive Control with Guaranteed Recursive Feasibility via Input Constrained Backstepping}

\author{Jianqiang Ding$^{1*}$, Nishant Jayesh Bhave$^{2}$ and Shankar A. Deka$^{1*}$,
\IEEEmembership{Member, IEEE}
\thanks{*We acknowledge the financial support of the Finnish Ministry of Education and Culture through the Intelligent Work Machines Doctoral Education Pilot Program (IWM VN/3137/2024-OKM-4).}
\thanks{$^{1}$ Department of Electrical Engineering, Aalto University, Finland. Email:
{\tt\small \{jianqiang.ding, shankar.deka\}}@aalto.fi.}
\thanks{$^{2}$ Department of Electrical Engineering, Indian Institute of Technology Bombay, India. Email:
{\tt\small nishant.bhave}@iitb.ac.in.}
}

\begin{document}

\maketitle
\thispagestyle{empty}
\pagestyle{empty}




\begin{abstract}
    This letter proposes a novel sampled-data model predictive control framework for continuous control-affine nonlinear systems that provides rigorous reach-avoid and recursive feasibility guarantees under physical constraints.
    By propagating both input and output constraints through backstepping process,
    we present a constructive approach to synthesize a reach-avoid invariant set that complies with control input limits.
    Using this reach-avoid set as a terminal set, we prove that the proposed sampled-data MPC framework recursively admits feasible control inputs that safely steer the continuous system into the target set under fast sampling conditions.
    Numerical results demonstrate the efficacy of the proposed approach.
\end{abstract}

\section{Introduction}

Model predictive control (MPC) framework is distinguished by its ability to enforce constraints on system outputs and control inputs, while accommodating a broad class of system dynamics, e.g., \cite{batkovic2023experimental}\cite{kennel2012energy}.
Although the framework has proven effective in mitigating model mismatches and external disturbances by incorporating real-time measurements as control feedback, 
the increasing complexity of modern systems and stricter requirements for operational safety and stability have introduced new challenges.
Motivated by these demands, designing MPC-based controllers with rigorous formal guarantees has emerged as a critical research focus at the intersection of control theory and formal methods.

In the context of safety-critical applications, controller synthesis typically requires that the system avoid undesired states during operation while simultaneously converging to, or remaining within, a target state set.
These two fundamental properties are formally characterized as safety and reachability.
Structurally, the MPC framework achieves guarantee on these properties by enforcing specific state constraints within the optimization problem.
For instance, Control Lyapunov Functions (CLFs) 
are frequently adapted to formulate constraints that guarantee closed-loop stability \cite{minniti2021adaptive}.
Complementarily, safety is typically preserved by incorporating
Control Barrier Functions (CBFs) with MPC frameworks \cite{zeng2021safety}.
However, the mere formulation of such constraints is insufficient to guarantee that the MPC controller can consistently generate valid control signals throughout the system operation. 
A more fundamental prerequisite of MPC is recursive feasibility, which states that if the optimization problem is feasible at the initial time, then it remains feasible at all future time steps. Conventionally, such feasibility is secured by constructing a control invariant set for the terminal states as investigated by \cite{kerrigan2000invariant}.
Nevertheless, constructing such a set is itself a nontrivial task, especially for nonlinear cases, as studied in \cite{bravo2005computation,decardi2021computing,legat2018computing,zhao2024nonlinear}. 

In this paper, we propose a novel MPC framework tailored for control-affine nonlinear systems that provides rigorous reach-avoid guarantees. The main contributions are summarized as below: 
\begin{itemize}
    \item We firstly present an approach that combines Exponential Control Guidance Barrier Functions (ECGBFs) along with backstepping procedure to construct reach-avoid sets for nonlinear systems based on the solution to a simplified optimization problem. 
    \item Specifically, we show how input constraints on the original system can be systematically propagated through the backstepping process, while preserving the structure of the simplified problem.
    
    \item Secondly, we demonstrate how our constructed reach-avoid set maintains its property under additional restrictions on the continuous time controller (zero-order hold). 
    \item This finally allows us to utilize the control-constrained reach-avoid set as the terminal set, along with sampled-data MPC framework to synthesize reach-avoid controllers in a recursively feasible manner. 
\end{itemize}
\textit{Notations:} 
$\mathbb{R}^n$ and $\mathbb{N}$ denote the $n-$dimensional Euclidean space and the set of natural numbers, respectively.
Vectors and matrices are represented by boldface lowercase and uppercase letters (e.g., $\bm{v}$ and $\bm{M}$).
 $\overline{\mathcal{S}}$ denotes closure of a set $\mathcal{S}$. The space of $n-$ times differentiable functions over a domain $\mathcal{S}$ is represented by $\mathcal{C}^n(\mathcal{S})$. $\mathbb{R}[\bm{x}]$ denotes the ring of multivariate polynomials in the variable $\bm{x}$, and $\sum[\bm{x}] \doteq \{ \sum_{i=1}^{k} q_i^2 \, |\, q_i \in \mathbb{R}[\bm{x}], k \in \mathbb{N}
\}$ is the set of sum-of-squares (SOS) polynomials. The $r^{th}$ order Lie derivative of a function $h$ w.r.t vector field $f$ is denoted by $\mathcal{L}_{f}^{r}h$.

\section{Preliminary}
We consider a control-affine nonlinear system of the form
\begin{equation}\label{eq: dynamic system} 
    \frac{d \bm{x}}{dt}
    = \bm{f}(\bm{x}) + \sum_{j=1}^{m}\bm{g}_{j}(\bm{x})u_{j};\quad \bm{y}  = \bm{h}(\bm{x}),
\end{equation}
where the states $\bm{x}\in \mathcal{X}\subseteq \mathbb{R}^{n}$ and control inputs $\bm{u} \doteq [u_{1},\ldots,u_m]^\top$ belong to the set of admissible inputs, $ \mathcal{U} \subseteq \mathbb{R}^m$. The functions $\bm{f}: \mathcal{X}\rightarrow \mathbb{R}^{n}$ and
$\bm{g}_{j}: \mathcal{X}\rightarrow \mathbb{R}^{n},~j\in\{1,\cdots, m\}$ are assumed to be locally Lipschitz, and $\bm{h}
: \mathcal{X} \rightarrow \mathbb{R}^m$ is considered to be a $\mathcal{C}^{\infty} (\mathcal{X})$ function.

We are interested in safe control of the system \eqref{eq: dynamic system}, while guaranteeing convergence to a target set. We formalise the notion of safe set $\mathcal{X}_S$ and target set $\mathcal{X}_T$ using $\mathcal{C}^1$ functions $\psi: \mathbb{R}^m \rightarrow \mathbb{R}$ and $\phi: \mathbb{R}^m \rightarrow \mathbb{R}$ as follows:
\begin{equation}\label{def:safe_target}
    \begin{split}
    \mathcal{X}_S & \doteq \{\bm{x} \in \mathcal{X} \;|\; \ \psi(\bm{y}) > 0,\, \bm{y} = \bm{h}(\bm{x}) \},\\
    \mathcal{X}_T & \doteq \{ \bm{x} \in \mathcal{X} \;|\; \phi (\bm{y}) < 0,\, \bm{y} = \bm{h}(\bm{x}) \}.
\end{split}
\end{equation}
\begin{assumption}\label{assume:safe_target_sets}
    We assume that $\mathcal{X}_T \cap \mathcal{X}_S$ is non-empty, and that $\mathcal{X}_S$ has no isolated points. 
    $\psi(\bm{y})$ is
$\mathcal{C}^1(\mathcal{X}_S)$, and possesses a unique global maximum $\bm{x}^* \in  \mathcal{X}_T \cap \mathcal{X}_S$.
\end{assumption}



\begin{problem} 
    Given a safe set $\mathcal{X}_S$ and a target set $\mathcal{X}_T$ satisfying Assumption \ref{assume:safe_target_sets}, the objective is to synthesize a continuous-time control policy 
    $\bm{u}(t) \in \mathcal{U}$
    for system \ref{eq: dynamic system} such that, from an initial state $\bm{x}_0 \in \mathcal{X}_S \setminus \mathcal{X}_T$, the corresponding closed-loop trajectory enters the target set $\mathcal{X}_T$ at a finite time $\tau := \inf \{ t > 0 \mid x(t) \in \mathcal{X}_T \}$ while 
    remaining safe, i.e., $\bm{x}(\tau)\in \mathcal{X}_T$ and $\bm{x}(t)\in \mathcal{X}_S \;\forall t\in[0,\tau].$ 
    The synthesis of such a control policy 
    over a predictive time interval $[t, t+ T_p]$ can be formally posed as the following optimization problem:
    \begin{equation}\label{eq:continuous_mpc}
    \begin{aligned}
    &\min_{\hat{\bm{u}}(\cdot | t)} \quad  J(\bm{x}(t), \hat{\bm{u}}(\cdot | t)) \\
    &\text{s.t.}  \begin{cases}
    \frac{\partial \hat{\bm{x}}(s | t)}{\partial s} = \bm{f}(\hat{\bm{x}}(s | t)) + \sum_{j=1}^m  \bm{g}_j(\hat{\bm{x}}(s | t))\hat{u}(s | t), \\
    \hat{\bm{x}}(t | t) = \bm{x}(t), \ 
    \hat{\bm{u}}(s | t) \in \mathcal{U}, \
    \hat{\bm{x}}(s | t) \in \mathcal{X}_S, \\
    \hat{\bm{x}}(t+T_p | t) \in \mathcal{X}_T.
    \end{cases}
    \end{aligned}
    \end{equation}
    where $\hat{\bm{x}}(s | t)$ and $\hat{\bm{u}}(s | t)$ denote the predicted state and control signal at future time $s \in [t, t+T_p]$. The cost function $J(\bm{x}(t), \hat{\bm{u}}(\cdot | t))$ 
    is suitably chosen to encourage the model predictive states $\bm{\hat{x}}$ to progressively approach the target set $\mathcal{X}_T$ over the MPC time horizon of $T_p$.
    For example, a candidate cost function can be formulated as
    \begin{equation*}
    J(\bm{x}(t), \hat{\bm{u}}(\cdot | t)) = \text{dist}_{\mathcal{X}_T}(\hat{\bm{x}}(t + T_p | t)) + \int_{t}^{t + T_p} \|\hat{\bm{u}}(s | t)\|^2 ds,
    \end{equation*}
    where $\text{dist}_{\mathcal{X}_T}:\mathcal{X}\rightarrow \mathbb{R}$ denotes the signed distance function from the target set $\mathcal{X}_T$.
\end{problem}

Although such an MPC-based controller can perform reasonably well in specific scenarios, it is generally known not to produce closed-loop behavior with rigorous guarantees. Firstly, the convergence of states to the target set $\mathcal{X}_T$ is not strictly enforced. One could consider a terminal constraint $\hat{\bm{x}}(t+T_p | t) \in \mathcal{X}_T$, potentially requiring a large MPC horizon $T_p$, which can be computationally expensive. Secondly, the closed-loop system with the MPC controller $u^*$ may steer the closed-loop states $\bm{x}(t)$ into parts of the state-space $\mathcal{X}$ from which the MPC problem may no longer be feasible. 
To ensure the system operates safely and eventually entering the given target set $\mathcal{X}_T$, the optimization problem \eqref{eq:continuous_mpc} must satisfy the property of recursive feasibility \cite{katriniok2023discrete,behrunani2024recursive,fang2022model}, which can be formally defined as follows.
\begin{definition} (Recursive Feasibility)
    The MPC optimization problem \eqref{eq:continuous_mpc} is recursively feasible if it is initially feasible at time $t=0$ for a given initial state $\bm{x}_0 \in \mathcal{X}_S \setminus \mathcal{X}_T$, and remains feasible for all subsequent continuous times $t>0$ before entering $\mathcal{X}_T$.
\end{definition}
To address these coupled challenges, we propose to construct a tailored reach-avoid set $\mathcal{X}_{RA}$, defined below, as the terminal set integrated into the MPC optimization problem \eqref{eq:continuous_mpc} to guarantee both the reach-avoid objective and recursive feasibility.

\begin{definition} (Reach-avoid set)
Consider a safe set $\mathcal{X}_S$ and a target set $\mathcal{X}_T$, and the controlled system \eqref{eq: dynamic system}. We call a set $\mathcal{X}_{RA} \subseteq  \mathcal{X}_S$ a reach-avoid set if for every state $\bm{x_0} \in \mathcal{X}_{RA}$, there exists a time $\tau>0$ and control signal $\bm{\hat{u}}(\cdot)$ mapping $[0,\tau] \mapsto \mathbb{R}^m$, such that $\bm{x}(t) \in \mathcal{X}_S$ for all $t \in [0,\tau]$ and $\bm{x}(\tau) \in \mathcal{X}_T$, whenever $\bm{x}(0)=\bm{x_0}.$
\end{definition}

As a foundational step toward synthesizing such terminal set, we first revisit the definition of the Exponential Control Guidance-barrier Function (ECGBF) as presented below.

\begin{definition}
(ECGBF\footnote{We adapt this definition from \cite{xue2024reach} to functions of outputs $\bm{y}$ rather than states $\bm{x}$. One major advantage of doing so is that we are able to employ SOS optimization to compute ECGBFs even for cases when the dynamics, safe set and target set are non-polynomial in $\bm{x}$, for example, robotic manipulators with end-effector constraints.}) Given the safe set $\mathcal{X}_S$ and target set $\mathcal{X}_T$ satisfying assumption \ref{assume:safe_target_sets}, the real-valued function $\psi(\bm{y})$ is an ECGBF if there exists $\lambda>0$ such that
\begin{align} \label{eq: ECGBF constraint}
    \sup_{\bm{u} \in \mathcal{U}} \frac{d}{dt}\psi(\bm{y}) \geq \lambda \psi(\bm{y}),\; \forall\bm{x} \in \overline{\mathcal{X}_S\setminus \mathcal{X}_T}.
\end{align}
\end{definition}

Notably, the zero-superlevel set of the ECGBF inherently satisfies the formal properties of a reach-avoid set. Specifically, it guarantees that for any state $\bm{x}_0$ within this set, there exists a unconstrained control input
$\hat{\bm{u}}(\cdot) \in \mathbb{R}^m$
ensuring the closed-loop system operates safely in $\mathcal{X}_S$ and eventually enters the prescribed target set $\mathcal{X}_T$.
While the standard ECGBF explicitly assumes that the control input $\bm{u}$ must directly influence $\dot{\psi}(\bm{y}(t))$, we do not impose this restrictive requirement in this paper.
To handle multi-input multi-output (MIMO) systems of the form \eqref{eq: dynamic system} that may possess arbitrary vector relative degree, we introduce the foundational result from our previous work \cite{ding2025backstepping} in the following lemma.

\begin{lemma} (Theorem 2 in \cite{ding2025backstepping}) 
    \label{lemma: unconstrained ECGBF for MIMO}
    Given the system \eqref{eq: dynamic system} with safe and target sets satisfying the Assumption \ref{assume:safe_target_sets}, suppose there exists locally Lipschitz continuous function $\bm{k}_1(\bm{y}(\bm{x})) = \begin{bmatrix}
    k_1^1(\bm{y}(\bm{x})) & \cdots & k_1^m(\bm{y}(\bm{x}))
    \end{bmatrix}^\top \in \mathbb{R}^m$ for some $\lambda>0$ such that,
    \begin{equation} \label{eq:ECGBF_single}
    \sum_{i=1}^m \frac{\partial \psi}{\partial y_i} \cdot k_1^i(\bm{y}(\bm{x}))
    \geq \lambda \psi(\bm{y}(\bm{x})), \forall \bm{x} \in \overline{\mathcal{X}_S \setminus \mathcal{X}_T}.
\end{equation}
Let $\{\gamma_1, \cdots, \gamma_m \}$ be the vector relative degree for system \eqref{eq: dynamic system} with the $i-$the output $\bm{y}_i \in \mathbb{R}$ denoted by $\gamma_i$, and $\bm{\eta}^i = [\eta^i_1,\ldots, \eta^i_{\gamma_i}]^\top = [y_{i}(\bm{x}), \ldots, \mathcal{L}_{f}^{r_i-1}y_{i}(\bm{x})]^\top$. Then,
\begin{align}
    \label{eq: RA function for MIMO}
    {\Psi}(\bm{y}(\bm{x})) = \psi(\bm{y}(\bm{x})) - \sum_{i=1}^m \sum_{l=1}^{\gamma_i-1}\frac{1}{2\mu_{l}^i}\|\eta_{l+1}^i-k_{l}^i(\bm{z}_{l}^i)\|_{2}^{2}
\end{align}
is an ECGBF for system \eqref{eq: dynamic system} w.r.t. safe set $\mathcal{X}'_S \doteq \{\bm{x} \in \mathcal{X} \;|\; \ \Psi(\bm{y}(\bm{x})) > 0, \bm{y} = \bm{h}(\bm{x})\}$ and target set $\mathcal{X}_T$,
where $\bm{z}_{l}^i= (\eta_{1}^i, \cdots, \eta_{l}^i)$, $\mu_l^i>0 \ \forall l \in \{1, \cdots, \gamma_i-1\}, i \in \{1, \cdots, m\}$. $k_l^i(\bm{z}_l^i)$ are functions recursively defined as 
\begin{align*}
\bm{k}_2^i(\bm{z}^i_2) &= \mu_{1}^i\frac{\partial \psi(\bm{y}(\bm{x}))}{\partial y_i} + \sum_{s=1}^{1}\frac{\partial k_{1}^i}{\partial \eta_{s}^i}\eta_{s+1}^i + \frac{\lambda}{2}(\eta_{2}^i- k_{1}^i), \\
    \bm{k}_l^i(\bm{z}_l^i) &= - \frac{\mu_{l-1}^i(\eta_{l-1}^i - k_{l-2}^i)}{\mu_{l-2}^i} + \sum_{s=1}^{i-1} \frac{\partial k_{l-1}^i}{\partial \eta_s} \eta_{s+1}^i \\
            & \qquad \quad + \frac{\lambda}{2} (\eta_l^i - k_{l-1}^i), \quad \forall l \in \{3, \cdots, \gamma_i-1\}.
\end{align*}
\end{lemma}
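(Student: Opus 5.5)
The plan is a standard backstepping Lyapunov-type argument carried out separately for each output channel $i$ and then summed. First I will put the system in normal form with respect to the vector relative degree. For each output $y_i$ the chain is $\dot\eta^i_l=\eta^i_{l+1}$ for $l<\gamma_i$, and $\dot\eta^i_{\gamma_i}=\mathcal{L}_f^{\gamma_i}y_i+\sum_j \mathcal{L}_{g_j}\mathcal{L}_f^{\gamma_i-1}y_i\,u_j$. Since the decoupling matrix is invertible (the definition of vector relative degree), the stacked top derivatives $v_i\doteq\dot\eta^i_{\gamma_i}$ can be assigned arbitrarily by choosing $\bm u\in\mathbb{R}^m$; here the lemma is in the unconstrained setting. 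For each channel I define the errors $e^i_l\doteq\eta^i_{l+1}-k^i_l(\bm z^i_l)$ for $l=1,\ldots,\gamma_i-1$. With this notation $\Psi=\psi-\sum_i\sum_l \frac{1}{2\mu^i_l}(e^i_l)^2$.

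Next I differentiate $\Psi$ along trajectories. Writing $\eta^i_2=k^i_1+e^i_1$ gives
\begin{align*}
\dot\psi=\sum_i \frac{\partial\psi}{\partial y_i}\bigl(k^i_1+e^i_1\bigr)\ge \lambda\psi+\sum_i\frac{\partial\psi}{\partial y_i}e^i_1
\end{align*}
on $\overline{\mathcal{X}_S\setminus\mathcal{X}_T}$, by hypothesis \eqref{eq:ECGBF_single}. For $l<\gamma_i-1$ the error derivative is
\begin{align*}
\dot e^i_l=\eta^i_{l+2}-\sum_{s\le l}\frac{\partial k^i_l}{\partial\eta^i_s}\eta^i_{s+1}=k^i_{l+1}+e^i_{l+1}-\sum_{s\le l}\frac{\partial k^i_l}{\partial\eta^i_s}\eta^i_{s+1}.
\end{align*}
Substituting the recursive definitions of $k^i_{l+1}$, the Lie-derivative terms cancel. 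The $\mu^i_1\partial\psi/\partial y_i$ term in $k^i_2$ cancels the cross term $\frac{\partial\psi}{\partial y_i}e^i_1$ coming from $\dot\psi$. The $-\mu^i_{l}e^i_{l-1}/\mu^i_{l-1}$ terms cancel the cross terms between consecutive errors, which is the telescoping structure. The remaining $\frac{\lambda}{2}e^i_l$ pieces produce $-\frac{1}{\mu^i_l}e^i_l\cdot\frac{\lambda}{2}e^i_l\cdot(-1)$ signs so that each squared error contributes $+\frac{\lambda}{2\mu^i_l}(e^i_l)^2$ to $\dot\Psi$. This is exactly $-\lambda$ times the corresponding term in $\Psi$. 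For the last error $e^i_{\gamma_i-1}$, $\dot e^i_{\gamma_i-1}$ contains $v_i$ directly, so I choose $v_i$ (hence $\bm u$) to produce the same cancellation and the $\frac{\lambda}{2}$ decay. Summing over $l$ and $i$ then gives $\sup_{\bm u}\dot\Psi\ge\lambda\Psi$.

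The remaining point is the domain. The inequality from \eqref{eq:ECGBF_single} is only available on $\overline{\mathcal{X}_S\setminus\mathcal{X}_T}$. I will argue that $\overline{\mathcal{X}'_S\setminus\mathcal{X}_T}\subseteq\overline{\mathcal{X}_S\setminus\mathcal{X}_T}$, because $\Psi\le\psi$ implies $\mathcal{X}'_S\subseteq\mathcal{X}_S$. Hence the ECGBF condition holds wherever it is required for the pair $(\mathcal{X}'_S,\mathcal{X}_T)$.

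I expect the main obstacle to be the index bookkeeping in the cancellation step. Each $\mu$-weighted cross term $\frac{1}{\mu^i_l}e^i_le^i_{l+1}$ must be exactly matched by the $-\mu^i_{l+1}e^i_l/\mu^i_l$ term inside $k^i_{l+2}$, multiplied by $\frac{1}{\mu^i_{l+1}}e^i_{l+1}$. The signs, and the apparent sign convention of the $\frac{\lambda}{2}$ term, have to be checked carefully so that the errors contribute $-\lambda\cdot(\text{negative term})$ rather than the opposite sign. The base case $l=1$ and the terminal step $l=\gamma_i-1$, where the input enters, both need separate treatment. The channels $i$ decouple except through $\psi$ and through $\bm u$ via the decoupling matrix.
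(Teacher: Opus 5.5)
The paper gives no proof of this lemma; it is imported from the cited prior work. Your strategy is the construction encoded in the recursion and in the controller $\hat{\bm u}=\bm A^{-1}\bm b$ of Section~\ref{subsec:reach-avoid set}: error coordinates $e^i_l=\eta^i_{l+1}-k^i_l$, cancelling $\frac{\partial\psi}{\partial y_i}e^i_1$ with the $\mu^i_1\frac{\partial\psi}{\partial y_i}$ term of $k^i_2$, telescoping consecutive cross terms, and assigning the top derivatives through the invertible decoupling matrix. Your index check of the cross-term cancellation is right.

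The genuine gap is the claim that the channels decouple except through $\psi$ and $\bm u$. In hypothesis \eqref{eq:ECGBF_single}, $k^i_1=k^i_1(\bm y)$ depends on the whole output. This is also how the paper uses it, via the SOS variable $\bm v(\bm y)$ and the common basis $\mathcal B(\bm y)$ in Theorem~\ref{theorem:affine structure}. Hence $\frac{d}{dt}k^i_1=\sum_j\frac{\partial k^i_1}{\partial y_j}\dot y_j$, and your identity $\dot e^i_l=\eta^i_{l+2}-\sum_{s\le l}\frac{\partial k^i_l}{\partial\eta^i_s}\eta^i_{s+1}$ drops every $j\neq i$ term. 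If the virtual controls use only channel-$i$ derivatives, as your formula presumes, terms such as $\frac{1}{\mu^i_1}e^i_1\sum_{j\neq i}\frac{\partial k^i_1}{\partial y_j}\dot y_j$ survive in $\dot\Psi$. They are sign-indefinite and nothing cancels them. So your argument only covers the decentralized case $k^i_1=k^i_1(y_i)$ suggested by the literal $\bm z^i_l$ notation.

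The repair is to replace $\sum_s\frac{\partial k^i_{l-1}}{\partial\eta^i_s}\eta^i_{s+1}$, in the recursion and in $b_i$, by the full closed-loop derivative $\frac{d}{dt}k^i_{l-1}$. This derivative is taken along every chain $\dot\eta^j_s=\eta^j_{s+1}$, with $\dot\eta^j_{\gamma_j}$ replaced by the assigned $v_j$ for channels whose chain is exhausted. The statement's $l\ge3$ formula writes $\partial\eta_s$ without a channel index, which fits this reading. The construction is well posed because channel $i$ only ever involves the $v_j$ (and their time derivatives) of channels with $\gamma_j<\gamma_i$. So the $v_j$ can be defined as state feedbacks in order of increasing relative degree and realized simultaneously through $\bm A(\bm x)$. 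Every cancellation you describe then goes through verbatim, but $k^i_l$ becomes a function of the full state rather than of $\bm z^i_l$ alone.

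Your sign bookkeeping for the $\frac{\lambda}{2}$ terms is wrong, though harmless. With the recursion as stated, $\dot e^i_l=\frac{\lambda}{2}e^i_l+(\text{cross terms})$; this is not a decay term, and there is no extra factor $(-1)$. Each error therefore contributes $-\frac{\lambda}{2\mu^i_l}(e^i_l)^2$ to $\dot\Psi$, which is $+\lambda$ times its term in $\Psi$. This gives $\dot\Psi\ge\lambda\psi-\sum_{i,l}\frac{\lambda}{2\mu^i_l}(e^i_l)^2=\lambda\Psi$ directly. What you flag as the main obstacle is not one: any contribution $\ge-\frac{\lambda}{2\mu^i_l}(e^i_l)^2$ suffices, so the sign only decides between equality and slack.

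You also need a regularity hypothesis. The construction differentiates $\bm k_1$ and $\psi$ repeatedly: $k^i_l$ contains their derivatives up to order $l-1$, and the top-level assignment uses $\frac{d}{dt}k^i_{\gamma_i-1}$. You must assume $\bm k_1,\psi\in\mathcal C^{\bar\gamma-1}$ with $\bar\gamma=\max_i\gamma_i$, and $\bm f,\bm g_j$ smooth enough for the Lie derivatives involved. Local Lipschitz continuity of $\bm k_1$ and $\psi\in\mathcal C^1$ do not make these derivatives exist, although the paper's polynomial SOS solutions satisfy the stronger requirement.

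Your domain argument, $\overline{\mathcal X'_S\setminus\mathcal X_T}\subseteq\overline{\mathcal X_S\setminus\mathcal X_T}$, is correct.
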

Lemma \ref{lemma: unconstrained ECGBF for MIMO} provides a systematic approach to construct an ECGBF for the MIMO system \eqref{eq: dynamic system} by leveraging backstepping procedure. Its zero-superlevel set $\mathcal{X}_{S}'$ explicitly defines a reach-avoid set $\mathcal{X}_{RA}$ for the system, which is a subset of $\mathcal{X}_S$ since $\Psi(\bm{y}(\bm{x})) \leq \psi(\bm{y}(\bm{x}))$ by construction. 
The key to constructing the reach-avoid set $\mathcal{X}_S'$ fundamentally lies in synthesizing a controller $\bm{k}_1(\bm{y}(\bm{x}))$ that satisfies constraint \eqref{eq:ECGBF_single}. 
This can be achieved via the following SOS program:
\begin{equation}\label{eq:unconstrained k1 SOS}
\begin{aligned}
&\bm{k}_1(\bm{y}(\bm{x})) = \arg\min_{\bm{v}(\bm{y})} \quad \delta \\
\text{s.t.}  & \begin{cases}
[\nabla_{\bm{y}}\psi(\bm{y})]^\top\bm{v} - \lambda \psi + \delta - s_0 \psi - s_1 \phi \in \Sigma [\bm{y}], \\
\delta > 0, \lambda > \epsilon; 
s_0(\bm{y}), s_1(\bm{y}) \in \Sigma[\bm{y}].
\end{cases}
\end{aligned}
\end{equation}

However, the above construction of the reach-avoid set $\mathcal{X}_S'$ inherently assumes unbounded actuation.
Section \ref{subsec:reach-avoid set} extends the above result to rigorously incorporate the given control input constraints $\bm{u} \in \mathcal{U}$, enabling the constructed set to serve as a valid terminal set with the MPC framework to guarantee both the reach-avoid property and recursive feasibility under input constraints.

\section{Recursive Feasible Reach-avoid MPC} \label{sec:method}

This section details of the establishment of the proposed reach-avoid MPC framework.
First, by transforming the control bounds into linear constraints that can be directly integrated into the optimization problem \eqref{eq:unconstrained k1 SOS}, section \ref{subsec:reach-avoid set} formulates an augmented SOS optimization problem to synthesize $\bm{k}_1(\bm{y}(\bm{x}))$ that 
compatible with 
the given physical actuation limits, thereby achieving the constructing of the input-constrained reach-avoid set.
Building upon this foundation, section \ref{subsec:ra_mpc framework} transforms the continuous-time reach-avoid MPC problem \eqref{eq:continuous_mpc} into a practically implementable sampled-data reach-avoid MPC problem. We prove that the sampled-data reach-avoid MPC problem is also recursively feasible under a sufficient small sampling interval.


\subsection{Construction of Input-Constrained Reach-avoid Set} \label{subsec:reach-avoid set}

Beyond providing a constructive approach to define the reach-avoid set $\mathcal{X}_{RA}$, Lemma \ref{lemma: unconstrained ECGBF for MIMO} actually yields, as a fundamental byproduct, a reach-avoid state-feedback controller $\hat{\bm{u}}(\bm{x})$ for the system \eqref{eq: dynamic system}. This $\hat{\bm{u}}(\bm{x})$ can be systematically extracted from $\bm{k}_1(\bm{y}(\bm{x}))$ through output feedback linearization and backstepping techniques \cite{ding2025backstepping}. Under the action of $\hat{\bm{u}}(\bm{x})$, the system state $\bm{x}$ is guaranteed to evolve safely within the given safe set $\mathcal{X}_S$ and ultimately enter the target set $\mathcal{X}_T$. 
Let vector $\bm{b}(\bm{x}) = [b_1(\bm{x}),  \ldots, b_m(\bm{x})]^\top$ with
\begin{gather*}
     b_i(\bm{x}) = - \mathcal{L}_f^{\gamma_i} y_i(\bm{x}) - \frac{\mu_{\gamma_i-1}^i(\eta_{\gamma_i-1}^i- k_{\gamma_i-2}^i)}{\mu_{\gamma-2}} \\
             \qquad \quad \quad + \sum_{s=1}^{\gamma_i-1}\frac{\partial k_{\gamma_i-1}^i}{\partial \eta_{s}^i}\eta_{s+1}^i+ \frac{\lambda}{2}(\eta_{\gamma_i}^i- k_{\gamma_i-1}^i),
\end{gather*}
the reach-avoid controller is given by $\hat{\bm{u}}(\bm{x}) = \bm{A}^{-1} \bm{b}(\bm{x})$ with
\begin{align} \label{matrix A(x)}
\small
\bm{A}(\bm{x})  = \hspace{-0.1cm}
\begin{bmatrix}
\mathcal{L}_{g_1}L_{f}^{r_1 -1 }y_{1}(\bm{x})&\cdots&\mathcal{L}_{g_m}\mathcal{L}_{f}^{r_1 -1 }y_{1}(\bm{x}) \\
\vdots&\ddots&\vdots \\ 
\mathcal{L}_{g_1}\mathcal{L}_{f}^{r_m -1 }y_{m}(\bm{x})&\cdots&\mathcal{L}_{g_m}\mathcal{L}_{f}^{r_m -1 }y_{m}(\bm{x})
\end{bmatrix} \hspace{-0.1cm}.
\end{align}

We observe that this backstepping framework establishes an explicit mapping from $\bm{k}_1(\bm{y}(\bm{x}))$ controller to the actual control input $\hat{\bm{u}}(\bm{x})$, allowing us to backpropagate the hard constraints imposed on the actual control input $\hat{\bm{u}}(\bm{x})$ back into synthesizing of $\bm{k}_1(\bm{y}(\bm{x}))$. 
We summarize this critical theoretical extension in the following theorem.

\begin{theorem} \label{theorem:affine structure}
    Suppose each component of $\bm{k}_1(\bm{y}(\bm{x}))$ is linearly parameterized as $\bm{k}^i_1(\bm{y}(\bm{x})) = \bm{\theta}_i^\top \mathcal{B}(\bm{y})$ for all $i \in \{1, \cdots, m \}$, where $\bm{\theta}_i$ is a coefficient vector and $\mathcal{B}(\bm{y})$ is a vector of basis functions. Then, any reach-avoid controller $\hat{\bm{u}}(\bm{x})$ induced by Lemma \ref{lemma: unconstrained ECGBF for MIMO} is affine w.r.t. the aggregated coefficient matrix $\bm{\Theta} = 
    \begin{bmatrix}
        \bm{\theta}_1^\top &
        \cdots & \bm{\theta}_m^\top
    \end{bmatrix}^\top$.
\end{theorem}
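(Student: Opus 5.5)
We argue by induction along the backstepping recursion of Lemma \ref{lemma: unconstrained ECGBF for MIMO}. The claim to carry through the induction is that, for every fixed state $\bm{x}$, each virtual control $k_l^i(\bm{z}_l^i)$ is an affine function of $\bm{\Theta}$, i.e.\ of the form $k_l^i = \bm{\alpha}_l^i(\bm{x})^\top \bm{\Theta} + \beta_l^i(\bm{x})$, with coefficients $\bm{\alpha}_l^i,\beta_l^i$ that depend only on the state. The parameters $\mu_l^i$ and $\lambda$ are regarded as fixed constants; they are not decision variables. The quantities $\eta_s^i = \mathcal{L}_f^{s-1}y_i(\bm{x})$, the Lie derivatives $\mathcal{L}_f^{\gamma_i}y_i$, and the matrix $\bm{A}(\bm{x})$ in \eqref{matrix A(x)} depend only on $\bm{x}$ and the system data, so they do not involve $\bm{\Theta}$ at all.

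\textbf{Base case.} By hypothesis $k_1^i = \bm{\theta}_i^\top \mathcal{B}(\bm{y})$, which is linear in $\bm{\theta}_i$ and hence linear in $\bm{\Theta}$. Its partial derivatives $\partial k_1^i/\partial \eta_s^i = \bm{\theta}_i^\top \partial \mathcal{B}/\partial \eta_s^i$ are also linear in $\bm{\Theta}$, because differentiation with respect to the state does not act on the constant coefficients.

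\textbf{Inductive step.} We examine each term in the recursions for $k_2^i$ and $k_l^i$.
\begin{itemize}
\item $\mu_1^i\,\partial\psi/\partial y_i$ does not depend on $\bm{\Theta}$.
\item $\tfrac{\lambda}{2}(\eta_l^i - k_{l-1}^i)$ and $-\mu_{l-1}^i(\eta_{l-1}^i - k_{l-2}^i)/\mu_{l-2}^i$ are affine combinations of earlier $k$'s with constant coefficients.
\item $\sum_s (\partial k_{l-1}^i/\partial \eta_s^i)\,\eta_{s+1}^i$ is affine in $\bm{\Theta}$, because the partial derivative of $\bm{\alpha}^\top\bm{\Theta}+\beta$ with respect to a state coordinate is $(\partial\bm{\alpha}/\partial\eta_s)^\top\bm{\Theta} + \partial\beta/\partial\eta_s$, and multiplying by the $\bm{\Theta}$-free factor $\eta_{s+1}^i$ keeps it affine.
\end{itemize}
Thus affinity in $\bm{\Theta}$ is preserved at every step, with the new coefficient functions obtained from the old ones by differentiation. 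This requires $\mathcal{B}$, $\psi$ and $\bm{h}$ to be sufficiently smooth, which holds since $\bm{h}$ is $\mathcal{C}^\infty$ and the basis functions are chosen smooth, e.g.\ polynomials as in \eqref{eq:unconstrained k1 SOS}. The same argument applies to each $b_i(\bm{x})$, since it is built from identical ingredients plus the $\bm{\Theta}$-independent term $-\mathcal{L}_f^{\gamma_i}y_i$.

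\textbf{Conclusion.} We get $\bm{b}(\bm{x}) = \bm{M}(\bm{x})\bm{\Theta} + \bm{c}(\bm{x})$. Hence $\hat{\bm{u}}(\bm{x}) = \bm{A}(\bm{x})^{-1}\bm{M}(\bm{x})\bm{\Theta} + \bm{A}(\bm{x})^{-1}\bm{c}(\bm{x})$, which is affine in $\bm{\Theta}$ because $\bm{A}^{-1}$ is independent of $\bm{\Theta}$. We assume $\bm{A}$ is invertible, as implicit in the well-defined vector relative degree. The main point needing care is that the total derivatives $\partial k_{l-1}^i/\partial\eta_s^i$ never generate products of $\bm{\theta}$'s. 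This holds because no $k$ is ever composed with, or multiplied by, another $\bm{\Theta}$-dependent quantity: each step only takes linear combinations of earlier $k$'s and differentiates in the state. Note that every $k_l^i$ depends only on $\bm{\theta}_i$, so $\bm{M}$ is block-structured; this fact is not needed for the claim.
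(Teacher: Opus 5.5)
Your proposal is correct and follows essentially the same route as the paper. Both argue by induction through the backstepping recursion that each virtual control has the form $\bm{\theta}_i^\top \mathcal{G}_l^i(\bm{x}) + \mathcal{H}_l^i(\bm{x})$ (the paper's notation for your $\bm{\alpha}_l^i$, $\beta_l^i$), then conclude that $\bm{b}(\bm{x})$, and hence $\hat{\bm{u}} = \bm{A}^{-1}\bm{b}$, is affine in $\bm{\Theta}$; the block structure you note appears there as $\bm{\mathcal{U}}(\bm{x}) = \mathrm{diag}[\mathcal{U}_1^\top(\bm{x}), \ldots, \mathcal{U}_m^\top(\bm{x})]$. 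You are slightly more careful than the paper's proof in two places: you treat the $k_2^i$ step explicitly, and you state that $\lambda$ and $\mu_l^i$ are fixed constants, which is needed because $\lambda$ multiplies $k_{l-1}^i$ and the paper only makes this point in the remark after the proof.
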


\begin{proof}
    By assumption, each $\bm{k}^i_1(\bm{y}(\bm{x}))$ takes the affine form $\bm{k}_1^i(z_1^i)  = \bm{\theta}_i^\top \mathcal{G}_1^i(y(x)) + \mathcal{H}_1^i(y(x))$ with $\mathcal{G}_1^i=\mathcal{B}$ and $\mathcal{H}_1^i=0$.
    Throughout the recursive backstepping formulation, this affine structure is preserved under linear operations and partial differentiations. By induction assuming that 
    at step $l \in \{3, \cdots, \gamma_i -1\}$, the previous two controllers $k_{l-2}^i(z_{l-2}^i)$ and $k_{l-1}^i(z_{l-1}^i)$ are both affine in $\bm{\theta}_i$ such that $\bm{k}_j^i(\bm{z}_j^i) = \bm{\theta}_i^\top \mathcal{G}_j^i(x) + \mathcal{H}_j^i(x)$ for $j \in \{l-1,l-2\}$,    
    the subsequent controller $k_l^i(z_l^i)$ yields $\bm{k}_l^i(\bm{z}_l^i)= \bm{\theta}_i^\top \mathcal{G}_l^i(x) + \mathcal{H}_l^i(x)$
    with 
    \begin{align*}
        \mathcal{G}_l^i(x) &= \frac{\mu_{l-1}^i}{\mu_{l-2}^i} \mathcal{G}_{l-2}^i(x) 
        +\sum_{s=1}^{l=1} \frac{\partial \mathcal{G}_{l-1}^i(x)}{\partial \eta_s} \eta_{s+1}^i
        - \frac{\lambda}{2} \mathcal{G}_{l-1}^i(x) \\
        \mathcal{H}_l^i(x) &= - \frac{\mu_{l-1}^i [\eta_{l-1}^i - \mathcal{H}_{l-2}(x)]}{\mu_{l-2}^i}  
        + \sum_{s=1}^{l=1} \frac{\partial \mathcal{H}_{l-1}^i(x)}{\partial \eta_s} \eta_{s+1}^i \\
        & \qquad \qquad \qquad \qquad \ \quad \qquad + \frac{\lambda [\eta_l^i - \mathcal{H}_{l-1}^i(x)]}{2}.
    \end{align*}
    Then, we have
    $
        b_i(x) 
        = \bm{\theta}_i^\top \mathcal{U}_i(x) + \mathcal{V}_i(x) =  \mathcal{U}_i^\top(x) \bm{\theta}_i + \mathcal{V}_i(x)$
    where $\mathcal{U}_i(x)$ and $\mathcal{V}_i(x)$ aggregate the respective recursive elements alongside the unactuated dynamics $- \mathcal{L}_f^{\gamma_i} h_i(\bm{x})$.
    Given $\bm{\Theta} = 
        \begin{bmatrix}
            \bm{\theta}_1^\top &
            \cdots & \bm{\theta}_m^\top
        \end{bmatrix}^\top \in \mathbb{R}^{\sum_{s=1}^m |\bm{\theta}_i| \times 1}$, the actual controller $\bm{u}(x)$ can be written as,
    \begin{align*}
        \hat{\bm{u}}(x) = \bm{A}^{-1}(x) \bm{b}(x)  
        = \bm{A}^{-1}(x)  \bm{\mathcal{U}}(x) \bm{\Theta} + \bm{A}^{-1}(x) \bm{\mathcal{V}}(x),
    \end{align*}
    with $\bm{\mathcal{U}}(x) = \text{diag}[\mathcal{U}^\top_1(x), \cdots, \mathcal{U}^\top_m(x)]$ and $\bm{\mathcal{V}}(x) = [\mathcal{V}_1(x), \cdots, \mathcal{V}_m(x)]^\top$,
    which completes the proof.
\end{proof}

Theorem \ref{theorem:affine structure} implies that when fixing all $\mu_l^i$ and $\lambda$ as chosen constants, the actual control input $\hat{\bm{u}}(\bm{x})$ remains strictly affine in the coefficient vector $\bm{\Theta}$. 
Therefore, any constraint on $\hat{\bm{u}}(\bm{x})$ that preserves this affine structure can be directly translated into linear constraints in the SOS program \eqref{eq:unconstrained k1 SOS} for solving the corresponding input-constrained $\bm{k}_1(\bm{y}(\bm{x}))$ controller.
Specifically, we consider the control input constraints taking the form,
\begin{align} \label{eq:control constraints}
    \mathcal{\bm{P}}(\bm{x}) \bm{u}(\bm{x}) \leq \mathcal{\bm{Q}}(\bm{x}) \quad \forall \bm{x} \in  \overline{\mathcal{X}_S \setminus \mathcal{X}_T}, 
\end{align}
where $\mathcal{P}(\bm{x})$ and $\mathcal{Q}(\bm{x})$ represent a state-dependent polynomial matrix and vector, respectively.
By incorporating such bounds into the problem \eqref{eq:unconstrained k1 SOS}, we formulate an augmented SOS optimization problem as below to synthesize a valid controller $\hat{\bm{k}}_1(\bm{y}(\bm{x}))$ which ultimately results in a reach-avoid controller $\hat{\bm{u}}(\bm{x})$ that satisfies the contraint \eqref{eq:control constraints}:
\begin{equation}\label{eq:constrained_k1_SOS}
\hspace{-0.45cm}
\begin{aligned}
\hat{\bm{\Theta}} &= \arg\min_{\bm{\Theta}, \delta, \lambda} \quad \delta \\
\text{s.t.} & \begin{cases}
[\nabla_{\bm{y}}\psi(\bm{y})]^\top \bm{\mathcal{B}} \bm{\Theta} - \lambda \psi + \delta - s_0 \psi - s_1 \phi \in \Sigma [\bm{y}], \\
\mathcal{\bm{P}}\text{Adj}(\bm{A}) \big(   \bm{\mathcal{U}} \bm{\Theta} + \bm{\mathcal{V}} \big) - \text{det}(\bm{A})\mathcal{\bm{Q}} - S_2 \psi(h(\bm{x}))\\ - S_3 \phi(h(\bm{x})) \in \Sigma [\bm{x}], \\
\delta > 0,\lambda > \epsilon; 
s_0(\bm{y}), s_1(\bm{y}) \in \Sigma[\bm{y}],\\ S_2(\bm{x}), S_3(\bm{x}) \in \Sigma[\bm{x}],
\end{cases}
\end{aligned}
\hspace{-1cm}
\end{equation}
where $S_2, S_3$ are SOS polynomial vectors of length $m$.



\begin{remark}
    While Theorem \ref{theorem:affine structure} enables enforcing constraints on the reach-avoid controller via the SOS program \eqref{eq:constrained_k1_SOS}, solving such a formulation directly inevitably incurs prohibitive computational costs if the state dimensions are large. To circumvent this bottleneck, we employ a sampling-based relaxation strategy that converts the second constraint of program \eqref{eq:constrained_k1_SOS} into a linear constraint (as opposed to a semi-definite program). This implementational detail can be found in \href{https://github.com/Aalto-Nonlinear-Systems-and-Control/reach_avoid_backstepping_MPC.git}{https://github.com/Aalto-Nonlinear-Systems-and-Control/reach\_avoid\_backstepping\_MPC.git}
\end{remark}

\subsection{Reach-avoid MPC with Recursive Feasibility Guarantees} \label{subsec:ra_mpc framework}

In practice, directly solving the continuous-time MPC problem \eqref{eq:continuous_mpc} is intractable due to infinite-dimensional decision space and continuous state constraints.
We thus transition to a tractable sampled-data MPC paradigm by parameterizing the control input as piecewise constant over sampling intervals.
Before formulating the equivalent MPC framework, we 
adapt the methodology from \cite{wu2008semi} to show 
that the continuous-time reach-avoid controller $\bm{u}(\bm{x})$ 
preserves the reach-avoid guarantee under a zero-order hold (ZOH) implementation given appropriate sampling conditions. We formalize this result in the following lemma.

\begin{lemma} \label{lemma:sampled-data system reach-avoid guarantees}
    Suppose the system \eqref{eq: dynamic system} with safe set $\mathcal{X}_S$ and target set $\mathcal{X}_T$ satisfying Assumption \ref{assume:safe_target_sets}, and actuated by a ZOH implementation of the continuous-time reach-avoid controller, $\bm{u}^c (\bm{x})$, such that for any $t \in [t_k, t_k+T)$, the resulting closed-loop dynamic are given by:
    \begin{align} \label{eq:zoh closed-loop system}
        \frac{d \bm{x}}{dt}
        = \bm{f}(\bm{x}(t)) + \sum_{j=1}^{m}\bm{g}_{j}(\bm{x}(t))u_{j}^c(t_k); \ \bm{y}  = \bm{h}(\bm{x}(t)).
    \end{align}
    Then 
    there exists a sampling period upper bound $T^* >0$ such that for any $T \in (0, T^*)$, all trajectories of the closed-loop system \eqref{eq:zoh closed-loop system} originating from the corresponding reach-avoid set $\mathcal{X}_{RA}$ remain within $\mathcal{X}_S$ before eventually entering $\mathcal{X}_T$.
\end{lemma}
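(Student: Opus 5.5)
The plan is to treat the ZOH input as a bounded perturbation of the continuous-time reach-avoid controller. In continuous time, the ECGBF $\Psi$ from Lemma \ref{lemma: unconstrained ECGBF for MIMO} satisfies $\dot\Psi \ge \lambda \Psi$ on $\overline{\mathcal{X}'_S\setminus\mathcal{X}_T}$ under $\bm{u}^c$. We then show that, for small enough $T$, a slightly weakened inequality $\dot\Psi \ge \lambda'\Psi$ with $0<\lambda'<\lambda$ still holds along ZOH trajectories. From this, forward invariance of $\{\Psi>0\}$ (hence safety, since $\Psi\le\psi$) and finite-time entry into $\mathcal{X}_T$ follow as in the ECGBF argument of \cite{xue2024reach}, adapting the sampled-data reasoning of \cite{wu2008semi}.

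The first step is to establish compactness and uniform bounds. We restrict attention to the compact set $\mathcal{K}=\overline{\mathcal{X}_{RA}\setminus\mathcal{X}_T}$, taking $\mathcal{X}_{RA}$ bounded, or intersecting with a sublevel set if necessary. On $\mathcal{K}$ we get:
\begin{itemize}
\item $\bm{u}^c$ is bounded because it lies in $\mathcal{U}$ and is locally Lipschitz, since $\bm{A}$ is invertible and all terms are smooth;
\item $\bm{f}+\sum_j\bm{g}_j u_j$ is bounded by some $M$ for admissible $\bm{u}$;
\item $\nabla\Psi$, $\bm{g}_j$ and $\bm{u}^c$ are Lipschitz with constants $L_\Psi$, $L_g$, $L_u$.
\end{itemize}
Consequently, on $[t_k,t_k+T)$ we have $\|\bm{x}(t)-\bm{x}(t_k)\|\le MT$ as long as the trajectory stays in $\mathcal{K}$. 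The second step bounds the derivative mismatch:
\[
\dot\Psi(\bm{x}(t)) = \nabla\Psi(\bm{x}(t))^\top\Big(\bm{f}+\sum_j\bm{g}_ju_j^c(\bm{x}(t))\Big) + \nabla\Psi^\top\sum_j\bm{g}_j\big(u_j^c(\bm{x}(t_k))-u_j^c(\bm{x}(t))\big).
\]
The first term is at least $\lambda\Psi(\bm{x}(t))$. The second is bounded in magnitude by $C\,L_u M T$, where $C=\sup_{\mathcal{K}}\|\nabla\Psi^\top \bm{g}\|$. This gives $\dot\Psi\ge\lambda\Psi-cT$ with $c=CL_uM$.

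The main obstacle is that the error $cT$ is additive and does not vanish as $\Psi\to0$ at the boundary. So $\dot\Psi\ge \lambda\Psi-cT$ alone cannot prevent leaving $\{\Psi>0\}$ from points where $\Psi$ is tiny. I would handle this by a margin argument.

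\emph{Entry into the target.} On $\{\Psi\ge \epsilon\}$, choosing $T<\lambda\epsilon/(2c)$ gives $\dot\Psi\ge\frac{\lambda}{2}\Psi$, so $\Psi$ grows exponentially. Since $\Psi\le\psi$ is bounded above on $\mathcal{K}$, the trajectory must exit $\mathcal{K}$ in finite time, and it can only do so through $\mathcal{X}_T$.

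\emph{Near the boundary.} For initial states with $\Psi\in(0,\epsilon)$, I would use the sampled structure. At each sampling instant $t_k$ the mismatch is zero, so $\dot\Psi(t_k)\ge\lambda\Psi(\bm{x}(t_k))>0$. Within an interval the mismatch grows like $c(t-t_k)$, so integrating gives
\[
\Psi(t)\ge \Psi(t_k)\big(1+\lambda(t-t_k)\big)-\tfrac{c}{2}(t-t_k)^2 .
\]
This stays positive only if $\Psi(t_k)\gtrsim cT^2$, which still fails at states arbitrarily close to the boundary.

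If this refinement is insufficient, I would follow \cite{wu2008semi} and establish the result semi-globally. For every compact subset $\mathcal{X}_{RA}^\epsilon=\{\Psi\ge\epsilon\}$ there is a $T^*(\epsilon)$ such that this subset is forward invariant under ZOH until $\mathcal{X}_T$ is reached. The estimate $\dot\Psi\ge\lambda\Psi-cT>0$ on $\{\Psi=\epsilon\}$ yields this, and the preceding paragraph gives reachability. Hence the lemma is interpreted with $\mathcal{X}_{RA}$ taken as such an inner compact level set. Alternatively, one designs $\bm{u}^c$ with a strict margin $\dot\Psi\ge\lambda\Psi+\delta$, using the slack $\delta$ from program \eqref{eq:constrained_k1_SOS}, and picks $T^*=\delta/c$, which gives the full set directly.
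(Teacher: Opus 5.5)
Your main route is essentially the paper's. The paper also splits $\dot\Psi_s=\dot\Psi_c+\nabla\Psi^\top\sum_j\bm g_j\,(u^c_j(t_k)-u^c_j(t))$ and bounds the intersample drift (via Gr\"onwall in terms of $\|\dot{\bm x}(t_k)\|$, rather than your simpler $MT$) to get $\dot\Psi_s\ge\lambda\Psi-\beta(T)$. It then uses the comparison lemma to obtain $\Psi(\bm x(t_{k+1}))-\Psi(\bm x(t_k))\ge(e^{\lambda T}-1)\,[\Psi(\bm x(t_k))-\beta(T)/\lambda]$ and closes with a boundedness contradiction. Your mismatch term correctly uses a Lipschitz constant of $\bm u^c$, where the paper writes a magnitude bound.

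The boundary obstruction you identify is genuine, and the paper does not overcome it. Its choice of $T^*$ with $\Psi(\bm x(t_k))-\beta(T^*)/\lambda>0$ depends on $\Psi(\bm x(t_k))$. For fixed $T$, however, $\beta(T)$ does not shrink as $\Psi\to0$, since $\dot{\bm x}$ need not vanish on $\partial\mathcal X_{RA}$. So its claimed positivity for all $\bm x\in\overline{\mathcal X_{RA}\setminus\mathcal X_T}$ is unjustified. What that argument really proves is your semi-global statement, with $T^*$ depending on $\Psi(\bm x_0)$.

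Your proposal, like the paper's proof, therefore does not establish the lemma as literally stated (a single $T^*$ for all of $\mathcal X_{RA}$). No sharpening of the estimates can fix this, because the uniform claim is false in general. Near $\bm x=0$ (the global assumptions can be arranged elsewhere), take $\dot x_1=u_1$, $\dot x_2=1+u_2$, $\bm y=\bm x$, $\psi=x_1-\tfrac{1}{2}x_2^2$ and $\bm u^c=(x_2+\lambda\psi,\,0)$. Then $\dot\psi=\lambda\psi$ in continuous time, and $\Psi=\psi$ (relative degree one). Under ZOH, $\dot\psi=\lambda\psi(t_k)-(t-t_k)$, hence $\psi(t_k+T)=\psi(t_k)(1+\lambda T)-T^2/2$. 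This is negative whenever $\psi(t_k)<T^2/(2(1+\lambda T))$. So for every $T>0$ some states of $\mathcal X_{RA}$ leave $\mathcal X_S$ within one sample, which matches exactly the $cT^2$ threshold you found. Restricting to $\{\Psi\ge\epsilon\}$, as you do, is the honest resolution, and your argument for that version is sound.

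Your margin alternative is the only one of your routes that would give a uniform $T^*$, but as written it misreads the sign of the slack. In \eqref{eq:constrained_k1_SOS}, the first constraint with $\delta>0$ minimized certifies only $\nabla\psi^\top\bm k_1\ge\lambda\psi-\delta$ on $\overline{\mathcal X_S\setminus\mathcal X_T}$ (where $\psi,\phi\ge0$). That is a permitted violation of the ECGBF inequality, not a margin. You would have to modify the program: use $-\delta$ in place of $+\delta$ and require $\delta>0$, e.g.\ by maximizing it. Because the backstepping terms are designed to cancel the cross terms, such a margin carries over to $\dot\Psi_c\ge\lambda\Psi+\delta$. Then $T^*=\delta/c$ works on all of $\mathcal X_{RA}$.
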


\begin{proof}
    Let $\dot{\bm{\Psi}}_c(t)$ and $\dot{\bm{\Psi}}_s(t)$ denote the time derivative of $\bm{\Psi}(\bm{y})$ in \eqref{eq: RA function for MIMO} driven by the continuous-time reach-avoid controller $\bm{u}^c(\bm{x})$ and its ZOH implementation such that:
    \begin{align*}
        \dot{\bm{\Psi}}_c(t)
        &=
        \frac{\partial \Psi(\bm{x}(t))}{\partial \bm{x}} 
        \bigg[
        \bm{f}(\bm{x}(t)) + \sum_{j=1}^{m}\bm{g}_{j}(\bm{x}(t))u_{j}^c(t)
        \bigg].
    \end{align*}
    By applying the Cauchy-Schwarz inequality, we have:
    \begin{align*}
        \dot{\bm{\Psi}}_s(t)
        &=
        \dot{\bm{\Psi}}_c(t) + 
        \sum_{j=1}^{m} \frac{\partial \Psi(\bm{x}(t))}{\partial \bm{x}} \bm{g}_{j}(\bm{x}(t))
        \bigg[ u_{j}^c(t_k) - u_{j}^c(t) \bigg] \\
        &\geq
        \dot{\bm{\Psi}}_c(t) - 
        \sum_{j=1}^m M_{\bm{g}_j} M_{\bm{u}_j^c} \left\| \bm{x}(t) - \bm{x}(t_k) \right\|
    \end{align*}
    where positive constants $M_{\bm{u}_j^c}$ and $M_{\bm{g}_j}$ strictly bound the magnitudes $|\bm{u}_j^c(\bm{x}) |$ and $\left\| \frac{\partial \Psi(\bm{x}(t))}{\partial \bm{x}} \bm{g}_{j}(\bm{x}(t)) \right\|$ respectively over the compact domain $\mathcal{X}$.
    Integrating the sampled-data dynamics from $t_k$ to $t$, we have
    \begin{align*}
        \bm{x}(t) - \bm{x}(t_k) = 
        \int_{t_k}^t 
        \bigg[\bm{f}(\bm{x}(s)) + \sum_{j=1}^{m}\bm{g}_{j}(\bm{x}(s))u_{j}^c(t_k) \bigg] \mathrm{d}s.
    \end{align*}
    Let $\ell_{\bm{f}}$ and $\ell_{\bm{g}_j}$ be the Lipchitz constants of $\bm{f}(\bm{x})$ and each corresponding $\bm{g}_j(\bm{x})$,
    by taking the norm of the integrand, then apply the triangle inequality and the Grönwall–Bellman inequality, the state drift 
    can be bounded as,
    \begin{align*}
        \left\| \bm{x}(t) - \bm{x}(t_k) \right\| 
        &\leq 
        \int_{t_k}^t
        \bigg[
        \ell_{\bm{f}}
        \left\| \bm{x}(s) - \bm{x}(t_k) \right\| + \left\| \dot{\bm{x}}(t_k) \right\|\\
        & \qquad \quad + \sum_{j=1}^m \ell_{\bm{g}_j} M_{u_j^c} \left\| \bm{x}(s) - \bm{x}(t_k) \right\|
        \bigg]
        \mathrm{d}s \\
        &\leq \frac{e^{\left(\ell_{\bm{f}} + \sum_{j=1}^m \ell_{\bm{g}_j} M_{u_j^c}\right)T} -1}{\ell_{\bm{f}} + \sum_{j=1}^m \ell_{\bm{g}_j} M_{u_j^c}} \left\| \dot{\bm{x}}(t_k) \right\| .
    \end{align*}
    Then we have,
    \begin{align*}
        \dot{\Psi}_s(\bm{x}(t)) &\geq \dot{\Psi}_c(\bm{x}(t))  - 
        \sum_{j=1}^m M_{\bm{g}_j} \ell_{\bm{u}_j} \left\| \bm{x}(t) - \bm{x}(t_k) \right\| \\
        &\geq \lambda \Psi_c(\bm{x}(t)) - \beta(T), \quad \forall t \in [t_k, t_k+T],
    \end{align*}
    where $\beta(T)= \left( \sum_{j=1}^m M_{\bm{g}_j} M_{\bm{u}_j} \right) \frac{e^{\ell_{sys}T} -1}{\ell_{sys}} \left\| \dot{\bm{x}}(t_k) \right\|$ and 
    $\ell_{sys} = \ell_{\bm{f}} + \sum_{j=1}^m \ell_{\bm{g}_j} M_{u_j^c}$. Using comparison lemma \cite{khalil2002nonlinear} over $t \in [t_k, t_k+T)$ we have,
    \begin{align*}
    \Psi_s(\bm{x}(t)) \geq e^{\lambda(t-t_k)} \Psi_c(\bm{x}(t_k)) + \frac{\beta(T)}{\lambda} \left( 1 - e^{\lambda(t-t_k)} \right).
    \end{align*}
    Let $T \in (0, T^*)$ such that $\Psi_c(\bm{x}(t_k)) - \frac{\beta(T^*)}{\lambda}> 0$. We obtain
    \small{
    \begin{align*}
        \Psi_s(\bm{x}(t_{k+1}))  - \Psi_s(\bm{x}(t_k)) &\geq (e^{\lambda T}  - 1 ) \left[ \Psi_c(\bm{x}(t_k)) - \frac{\beta(T)}{\lambda} \right] \\
        &> 0 , \quad \forall \bm{x} \in \overline{\mathcal{X}_{RA} \setminus \mathcal{X}_T}.
    \end{align*}}
    Suppose 
    a sampled-data trajectory with sampling period $T \in (0, T^*)$ originates in $\mathcal{X}_{RA}$ but never enters $\mathcal{X}_T$.
    The trajectory would then remain permanently within the compact set $\overline{\mathcal{X}_{RA} \setminus \mathcal{X}_T}$ where $\Psi(\bm{x})$ strictly increases.
    This strict monotonic increase
    implies $\lim_{k \rightarrow \infty} \Psi(\bm{x}(t_k)) \rightarrow +\infty$ over infinite sampling steps
    which contradicts
    the boundedness of the continuous function $\Psi(\bm{x})$ on the compact set $\mathcal{X}_{RA}$. Therefore, 
    all sampled-data trajectories 
    must remain within $\mathcal{X}_{RA} \subseteq \mathcal{X}_S$ before eventually reaching $\mathcal{X}_T$.
\end{proof}
\captionsetup[subfigure]{justification=centering,singlelinecheck=false}
\begin{figure*}[t]
    \centering
    \begin{subfigure}[t]{0.23\linewidth}
        \centering
        \includegraphics[width=\linewidth]{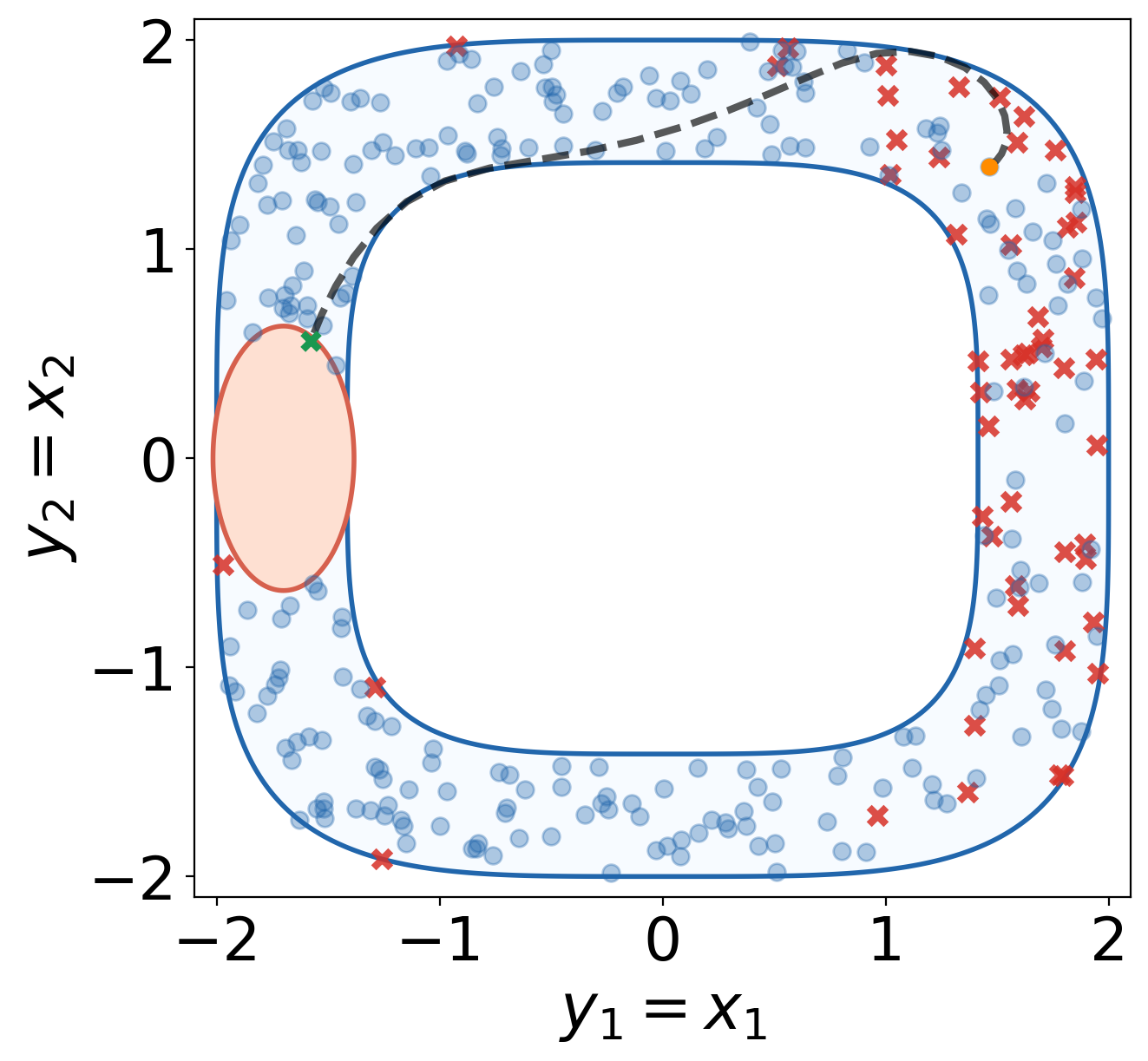}
        \caption{Vanilla MPC with 245/300.}
        \label{fig:vanilla mpc}
    \end{subfigure}
    \hfill
    \begin{subfigure}[t]{0.23\linewidth}
        \centering
        \includegraphics[width=\linewidth]{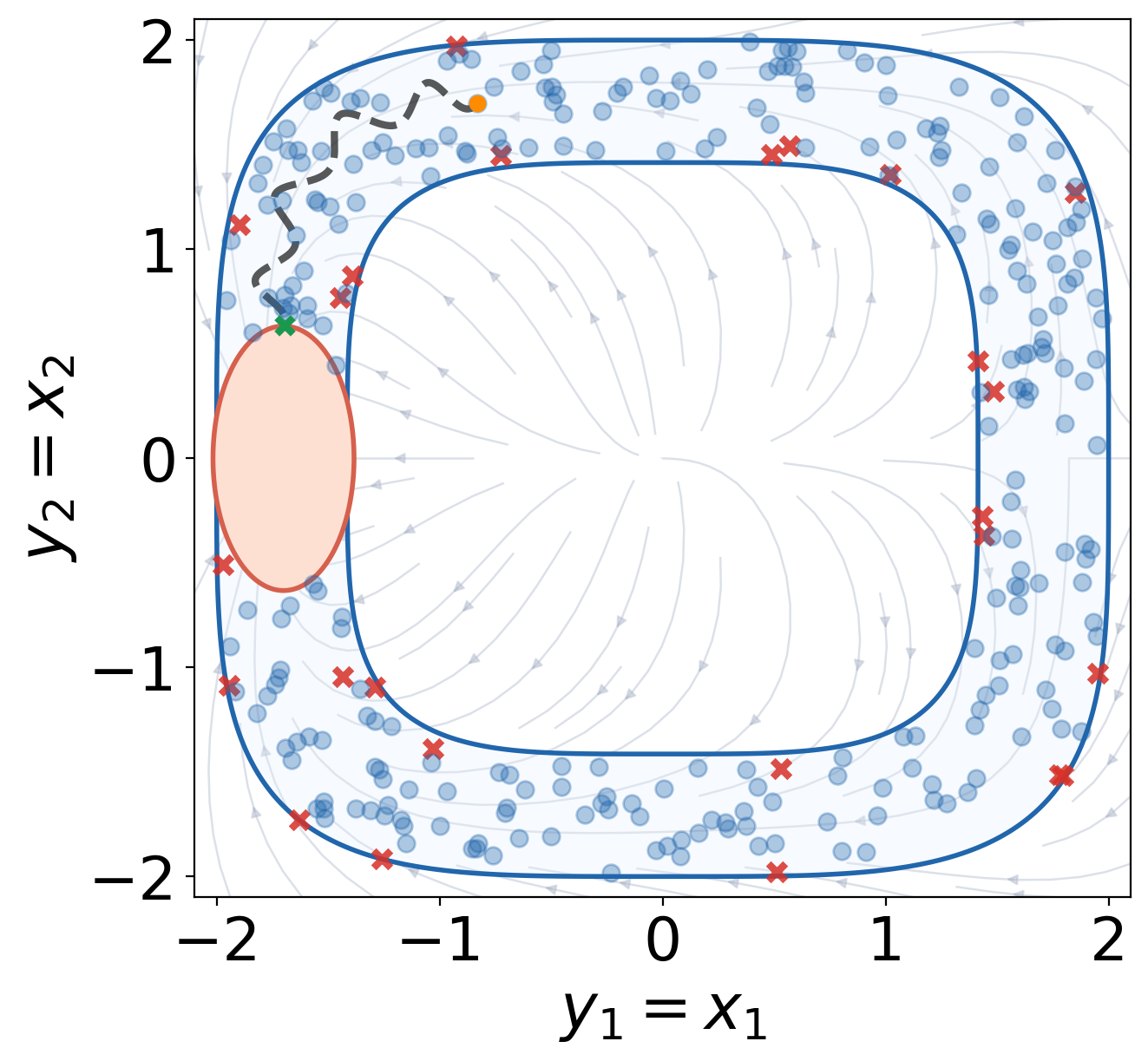}
        \caption{Unconstrained reach-avoid with  275/300.}
        \label{fig:unconstrained reach-avoid}
    \end{subfigure}
    \hfill
    \begin{subfigure}[t]{0.23\linewidth}
        \centering
        \includegraphics[width=\linewidth]{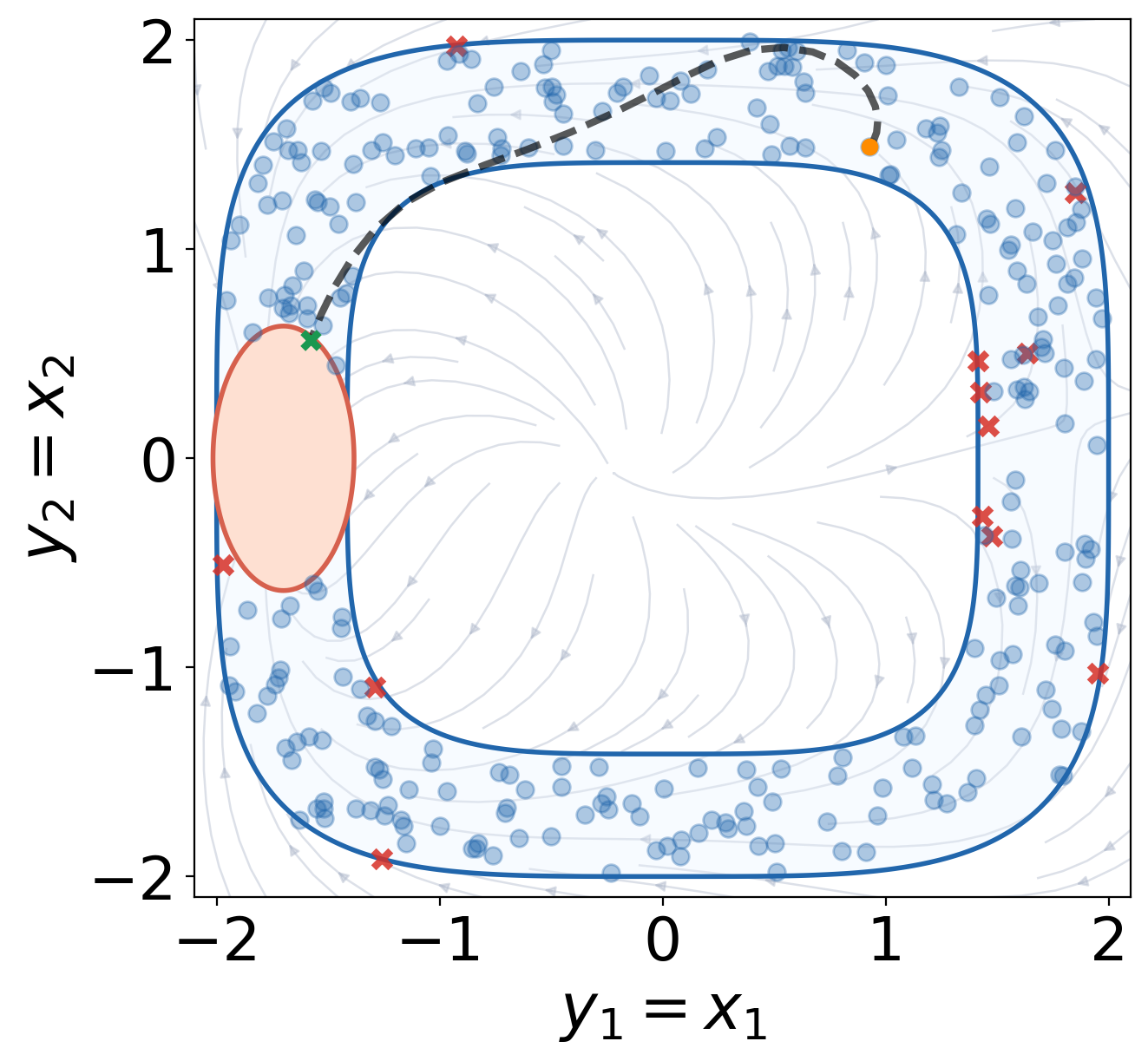}
        \caption{Our method with 288/300.}
        \label{fig:constrained reach-avoid mpc}
    \end{subfigure}
    \hfill
    \begin{subfigure}[t]{0.29\linewidth}
        \centering
        \includegraphics[width=\linewidth]{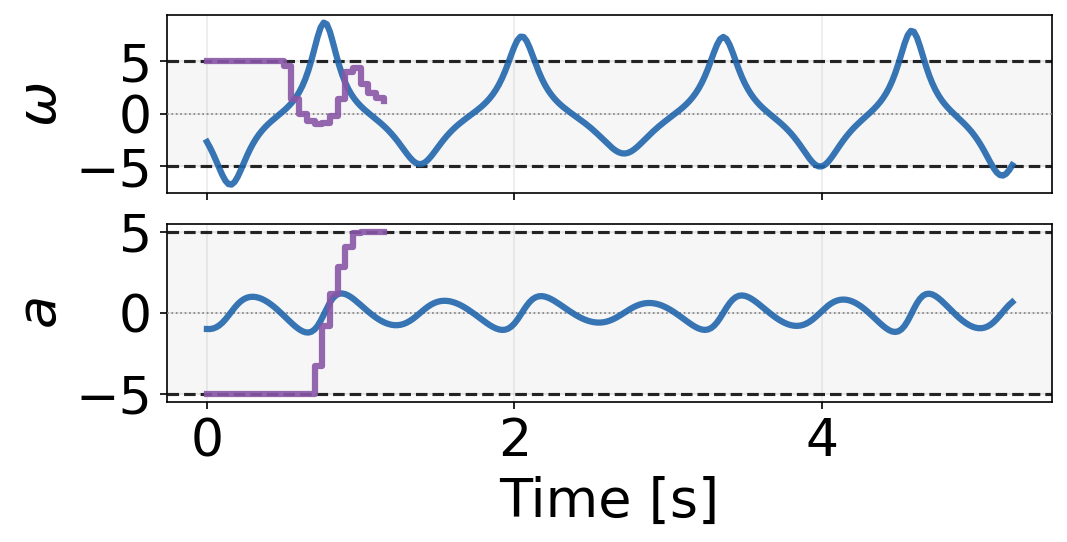}
        \caption{Control inputs comparison}
        \label{fig:control inputs comparison}
    \end{subfigure}    
    \caption{
    (\subref{fig:vanilla mpc}), (\subref{fig:unconstrained reach-avoid}) \& (\subref{fig:constrained reach-avoid mpc}) show controller success rates for $300$ random initial states. Blue dots indicate successful states achieving the reach-avoid objective, while red crosses represent failed ones. A closed-loop trajectory (dashed line from orange dot to green cross) is included for visualization. Initial angles are omitted for clarity. (\subref{fig:control inputs comparison}) represents control inputs comparison between unconstrained reach-avoid controller (blue) and the proposed MPC controller (purple).
    }
    \label{fig:ex10}\vspace{-0.5cm}
\end{figure*}

Lemma \ref{lemma:sampled-data system reach-avoid guarantees} demonstrates that sufficiently fast sampling preserves the reach-avoid \footnote{Note that the exponential convergence rate is generally lost under sampled-data execution as illustrated in the proof of Lemma \ref{lemma:sampled-data system reach-avoid guarantees}.} guarantees for sampled-data control. 
Building upon this theoretical foundation, the following theorem establishes the complete sampled-data reach-avoid MPC framework with guaranteed recursive feasibility.

\begin{theorem}  
    Given the continuous-time dynamical system \eqref{eq: dynamic system} with safe set $\mathcal{X}_S$ and target set $\mathcal{X}_T$ both satisfying Assumption \ref{assume:safe_target_sets},
    suppose the sampling condition established in Lemma \ref{lemma:sampled-data system reach-avoid guarantees} holds. If the sampled-data reach-avoid MPC problem formulated below is feasible at the initial step $k=0$, then there exists a sampled-data control policy that rigorously guarantees recursive feasibility for all subsequent steps $k \in \mathbb{N}$ and successfully achieves the reach-avoid objective for the corresponding closed-loop system:
    \begin{equation} \label{eq:sampled-data MPC} \hspace{-0.2cm}
        \begin{aligned}
        \hspace{0cm} &\min_{\mathbf{U}_k} \quad  J(\bm{x}_k, \mathbf{U}_k) \\
        \hspace{0cm} &\text{s.t.}  \begin{cases}
        \frac{\partial \hat{\bm{x}}(\tau | k)}{\partial \tau} = \bm{f}(\hat{\bm{x}}(\tau | k)) + \sum_{j=1}^m  \bm{g}_j(\hat{\bm{x}}(\tau | k))\bm{u}_{i|k}, \\ 
        \qquad \qquad \qquad \qquad \qquad  \forall \tau \in [iT, (i+1)T); \\
        \hat{\bm{x}}(0 | k) = \bm{x}_k; 
        \bm{u}_{i|k} \in \mathcal{U}, \ \forall i \in \{0, \dots, N-1\}; \\
        \hat{\bm{x}}(\tau | k) \in \mathcal{X}_S, \ \forall \tau \in [0, NT]; \
        \hat{\bm{x}}(NT | k) \in \mathcal{X}_{RA};
        \end{cases}
        \end{aligned}
        \hspace{-0.5cm}
    \end{equation}
    where $\bm{x}_k = \bm{x}(kT)$ is the sampled state at step $k$ and $\mathbf{U}_k = \{ \bm{u}_{0|k} \cdots, \bm{u}_{N-1 | k} \}$ represents the optimal control sequence over the prediction horizon $N$. $\tau \in [0, NT]$ denotes the relative continuous prediction time instant, and $\hat{\bm{x}}(\tau | k)$ defines the predicted continuous state trajectory driven by the piecewise constant control input.
\end{theorem}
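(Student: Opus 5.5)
The argument is the standard shifted-sequence (candidate solution) construction, with the ZOH reach-avoid controller of Lemma \ref{lemma:sampled-data system reach-avoid guarantees} supplying the terminal step. Fix $T\in(0,T^*)$ as in that lemma. Let $\mathcal{X}_{RA}$ be the reach-avoid set obtained from Lemma \ref{lemma: unconstrained ECGBF for MIMO} with $\hat{\bm{k}}_1$ given by the constrained program \eqref{eq:constrained_k1_SOS}. Then the backstepping controller $\hat{\bm{u}}(\bm{x})=\bm{A}^{-1}\bm{b}$ satisfies \eqref{eq:control constraints}, i.e. $\hat{\bm{u}}(\bm{x})\in\mathcal{U}$ on $\overline{\mathcal{X}_S\setminus\mathcal{X}_T}$. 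This is where Theorem \ref{theorem:affine structure} enters: it is the reason the second SOS constraint certifies exactly this inclusion. Everything is by induction on $k$: assume \eqref{eq:sampled-data MPC} is feasible at step $k$ with optimizer $\mathbf{U}_k^*=\{\bm{u}^*_{0|k},\dots,\bm{u}^*_{N-1|k}\}$. We apply $\bm{u}^*_{0|k}$ on $[kT,(k+1)T)$ and show that step $k+1$ is feasible.

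\textbf{Candidate construction.} Because the prediction model is exact (nominal dynamics), the true state satisfies $\bm{x}_{k+1}=\hat{\bm{x}}(T|k)$, and the predicted trajectory from step $k$ restricted to $[T,NT]$ is itself a valid trajectory from $\bm{x}_{k+1}$. Define
\[
\tilde{\mathbf{U}}_{k+1}=\{\bm{u}^*_{1|k},\dots,\bm{u}^*_{N-1|k},\,\hat{\bm{u}}(\hat{\bm{x}}(NT|k))\}.
\]
The first $N-1$ inputs lie in $\mathcal{U}$ and keep the predicted state in $\mathcal{X}_S$ on $[0,(N-1)T]$, directly from feasibility at step $k$. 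At relative time $(N-1)T$, the candidate state equals $\hat{\bm{x}}(NT|k)\in\mathcal{X}_{RA}$. The last input is the ZOH value of the reach-avoid controller at that point, so it lies in $\mathcal{U}$ by \eqref{eq:control constraints}.

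\textbf{Terminal step.} It remains to show that over the final interval the state stays in $\mathcal{X}_S$ and ends in $\mathcal{X}_{RA}$. I would take this from the proof of Lemma \ref{lemma:sampled-data system reach-avoid guarantees}. On one ZOH interval starting in $\overline{\mathcal{X}_{RA}\setminus\mathcal{X}_T}$, the comparison bound gives $\Psi_s(t)\ge e^{\lambda(t-t_k)}\Psi(t_k)+\tfrac{\beta(T)}{\lambda}(1-e^{\lambda(t-t_k)})$. Under the sampling condition this is positive and nondecreasing, so $\Psi>0$ throughout the interval. Hence the state stays in $\mathcal{X}_{RA}\subseteq\mathcal{X}_S$ and in particular $\hat{\bm{x}}((N)T|k+1)\in\mathcal{X}_{RA}$. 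If instead the terminal state already lies in $\mathcal{X}_T$, the reach-avoid objective has been met at that point and feasibility is only required before entering $\mathcal{X}_T$, by the Definition of recursive feasibility. A light extra argument may still be needed at the boundary $\partial\mathcal{X}_T$, for instance using closure and continuity. So $\tilde{\mathbf{U}}_{k+1}$ is feasible, and induction gives feasibility for all $k$.

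\textbf{Reach-avoid conclusion and main obstacle.} Safety of the closed loop follows because the applied trajectory coincides with the first intervals of feasible predictions, which lie in $\mathcal{X}_S$. Finite-time reaching is the delicate point, since the optimal MPC inputs need not increase $\Psi$. The theorem only claims that \emph{there exists} a sampled-data policy, so I would exhibit one: run the MPC, or equivalently execute the feasible plan from step $0$, for $N$ steps, reaching $\hat{\bm{x}}(NT|0)\in\mathcal{X}_{RA}$. From then on, apply the ZOH reach-avoid controller, which is exactly the policy generated by repeatedly choosing the tail-shifted candidate. Lemma \ref{lemma:sampled-data system reach-avoid guarantees} then gives entry into $\mathcal{X}_T$ in finite time while remaining in $\mathcal{X}_S$, and the candidate argument shows the MPC stays feasible along this policy. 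I expect the main obstacle to be the inter-sample guarantee $\Psi>0$ on the last interval. It requires uniform constants $M_{\bm{g}_j},M_{\bm{u}_j^c},\ell_{sys}$ and a uniform $T^*$ over the compact set $\overline{\mathcal{X}_{RA}}$, which needs care near $\Psi=0$ where the condition $\Psi_c(\bm{x}(t_k))>\beta(T^*)/\lambda$ degenerates. I would handle this by shrinking $\mathcal{X}_{RA}$ to a superlevel set $\{\Psi\ge c\}$ with $c>0$ if necessary.
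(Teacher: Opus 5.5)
Your proposal is essentially the paper's proof. It uses the same tail-shifted candidate $\{\bm{u}^*_{1|k},\dots,\bm{u}^*_{N-1|k},\hat{\bm{u}}(\hat{\bm{x}}(NT|k))\}$, with Lemma~\ref{lemma:sampled-data system reach-avoid guarantees} certifying the appended ZOH interval. You are more explicit than the paper in three places: why the appended input lies in $\mathcal{U}$, finite-time reaching (which the paper's proof does not argue), and the fact that the paper's $T^*$ is not uniform as $\Psi\to 0$, which your restriction to a superlevel set $\{\Psi\ge c\}$ with $c>0$ correctly repairs.

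One caveat: your dismissal of the case $\hat{\bm{x}}(NT|k)\in\mathcal{X}_T$ conflates a predicted state with the actual closed-loop state. Feasibility at step $k+1$ is still required whenever $\bm{x}_{k+1}\notin\mathcal{X}_T$, and neither the input bound nor the ECGBF inequality is certified inside $\mathcal{X}_T$. The paper's proof, however, leaves this case unaddressed as well.
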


\begin{proof}
    By assumption, the sampled-data MPC problem \eqref{eq:sampled-data MPC} is feasible at the initial step $k=0$. 
    Let $\mathcal{X}_{RA}$ denotes the reach-avoid set synthesized from a valid $\bm{k}_1(\bm{y}(\bm{x}))$ controller of the SOS problem \eqref{eq:constrained_k1_SOS}, and $\bm{u}(\bm{x})$ represents the corresponding continuous-time reach-avoid controller extracted from $\bm{k}_1(\bm{y}(\bm{x}))$ via backstepping.
    Suppose an optimal control sequence $\mathbf{U}_k^* = \{ \bm{u}_{0|k}^* \cdots, \bm{u}_{N-1 | k}^* \}$ exists at step $k$. We define the candidate control sequence for step $k+1$ as $\mathbf{U}_{k+1} = \{\bm{u}_{1|k}^*, \cdots, \bm{u}_{N-1|k}^*, \bm{u}(\hat{\bm{x}}^*(NT|k)) \}$.
    Since the first $N-1$ control inputs of $\mathbf{U}_{k+1}$ are inherited from $\mathbf{U}_k^*$, they naturally satisfy the required reach-avoid properties by assumption. Given that $\bm{u}(\hat{\bm{x}}^*(NT|k))$ is essentially a sampled control input of the continuous-time reach-avoid controller $\bm{u}(\bm{x})$ at time time $NT|k$. 
    According to Lemma \ref{lemma:sampled-data system reach-avoid guarantees}, given a sufficiently small sampling period $T$, the closed-loop trajectory under the control of $\bm{u}(\hat{\bm{x}}^*(NT|k))$ over the prediction interval $\tau \in [(k+N)T, (k+N+1)T]$ remains strictly within the safe set $\mathcal{X}_S$ and the terminal state $\bm{x}(NT|k+1)$ stays inside $\mathcal{X}_{RA}$.
    Therefore the candidate control sequence $\mathbf{U}_{k+1}$ constitutes a feasible solution at step $k+1$. By induction this establishes strict recursive feasibility for the proposed sampled-data reach-avoid MPC framework across all discrete steps $k \in \mathbb{N}^+$, which completes the proof.
\end{proof}

\section{Experiments}

In this section, we demonstrate the proposed methodology on two under actuated dynamical systems. We use SOSTOOLS \cite{sostools} toolbox with Mosek \cite{aps2019mosek} solver to formulate and solve SOS optimization problems.
The complete source code and simulation configurations are available at \href{https://github.com/Aalto-Nonlinear-Systems-and-Control/reach_avoid_backstepping_MPC.git}{https://github.com/Aalto-Nonlinear-Systems-and-Control/reach\_avoid\_backstepping\_MPC.git}


\begin{example}

Consider the modified Dubins car model
\begin{equation}
    \dot{\bm{x}} = 
    \left[\begin{array}{c}\dot{x}_1\\\dot{x}_2\\\dot{\theta}\\\dot{v}\end{array}\right]= 
    \left[\begin{array}{c}v\cos{\theta}\\v\sin{\theta}\\0\\0\end{array}\right] + \left[\begin{array}{cc}0 & 0\\0 & 0\\1 & 0\\0&1\end{array}\right]
    \left[\begin{array}{c}\omega\\a\end{array}\right],
    \label{eq.dubins_car}
\end{equation}
with $\bm{x} = [x_1,x_2,\theta,v]^\top \in \mathbb{R}^4$, where $(x_1,x_2)$ denotes the planar location, $\theta$ represents the heading angle, and $v$ is the forward velocity. The vehicle is actuated by the angular velocity $\omega$ and forward acceleration $a$ as control inputs $\bm{u} = [\omega, a]^\top \in \mathbb{R}^2$. 
Given the system output
$\bm{y} = [y_1, y_2]^\top = [x_1, x_2]^\top$, the control objective is to safely steer the initial states within the safe set $\mathcal{X}_S = \{ \bm{x} \in \mathbb{R}^4 \mid \psi(\bm{y}) > 0 \}$ into the target set $\mathcal{X}_T = \{ \bm{x} \in \mathbb{R}^4 \mid \phi(\bm{y}) < 0 \}$ where $\phi(\bm{y}) = y_1^2+4(y_2+1.7)^2 - 0.4$ and $\psi(\bm{y}) = 10^{-3} (\phi(\bm{y})-300) (y_1^4 + y_2^4 -16) (y_1^4 + y_2^4 -4).$
We compare three distinct methods namely vanilla MPC, the unconstrained reach-avoid controller and the proposed constrained reach-avoid MPC to accomplish this task on $300$ safe initial states randomly sampled outside the target set. 
Both vanilla MPC and the proposed framework minimize quadratic cost under actuator limits $|u_{1,2}| \leq 5$ with prediction horizon $N=25$ and sampling step $T=0.05s$. 
Unlike vanilla MPC which utilizes the target set as terminal constraint, the proposed framework leverages the computed constrained reach-avoid set.
As shown in Fig. \ref{fig:constrained reach-avoid mpc}, the proposed MPC achieves a significantly broader feasible domain than vanilla MPC in Fig. \ref{fig:vanilla mpc} under identical prediction horizons due to the computed $\mathcal{X}_{RA}$ as terminal set.
Furthermore, compared to the unconstrained reach-avoid controller in Fig. \ref{fig:unconstrained reach-avoid}, our MPC optimization yields smoother trajectories with minimal control energy.
Fig. \ref{fig:control inputs comparison} demonstrates that while the unconstrained reach-avoid controller guarantees the reach-avoid property, it severely violates physical actuator bounds and requires more time to reach the target despite starting closer. 
Conversely the proposed reach-avoid MPC reach the target faster with less control effort while maintaining all control inputs strictly within the limits throughout the entire evolution.

\end{example}

    \captionsetup[subfigure]{justification=centering,singlelinecheck=false}
\begin{figure*}[t]
    \centering
    \begin{subfigure}[t]{0.23\linewidth}
        \centering
        \includegraphics[width=\linewidth]{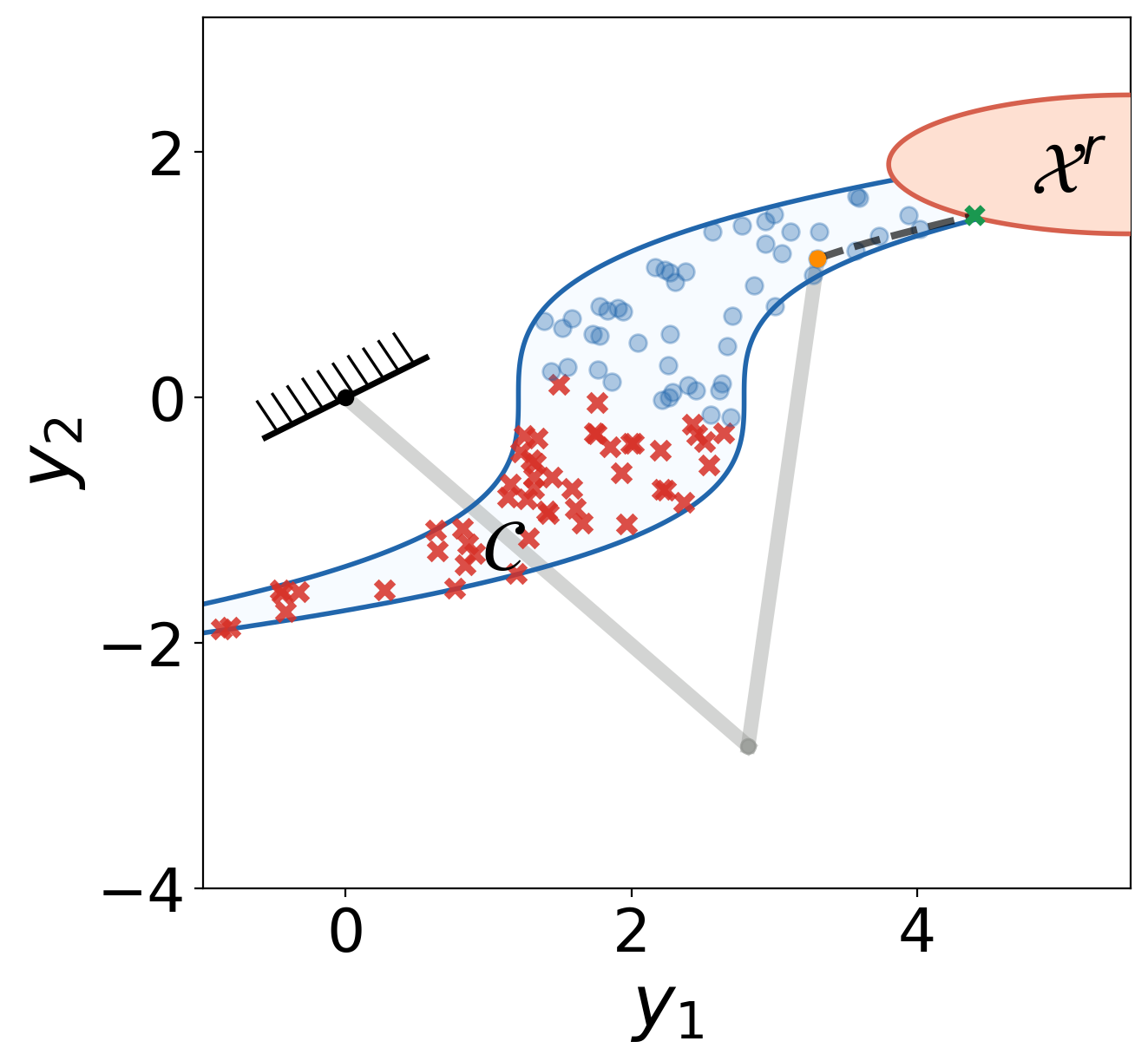}
        \caption{Vanilla MPC with 50/100.}
        \label{fig:vanilla mpc manipulator}
    \end{subfigure}
    \hfill
    \begin{subfigure}[t]{0.23\linewidth}
        \centering
        \includegraphics[width=\linewidth]{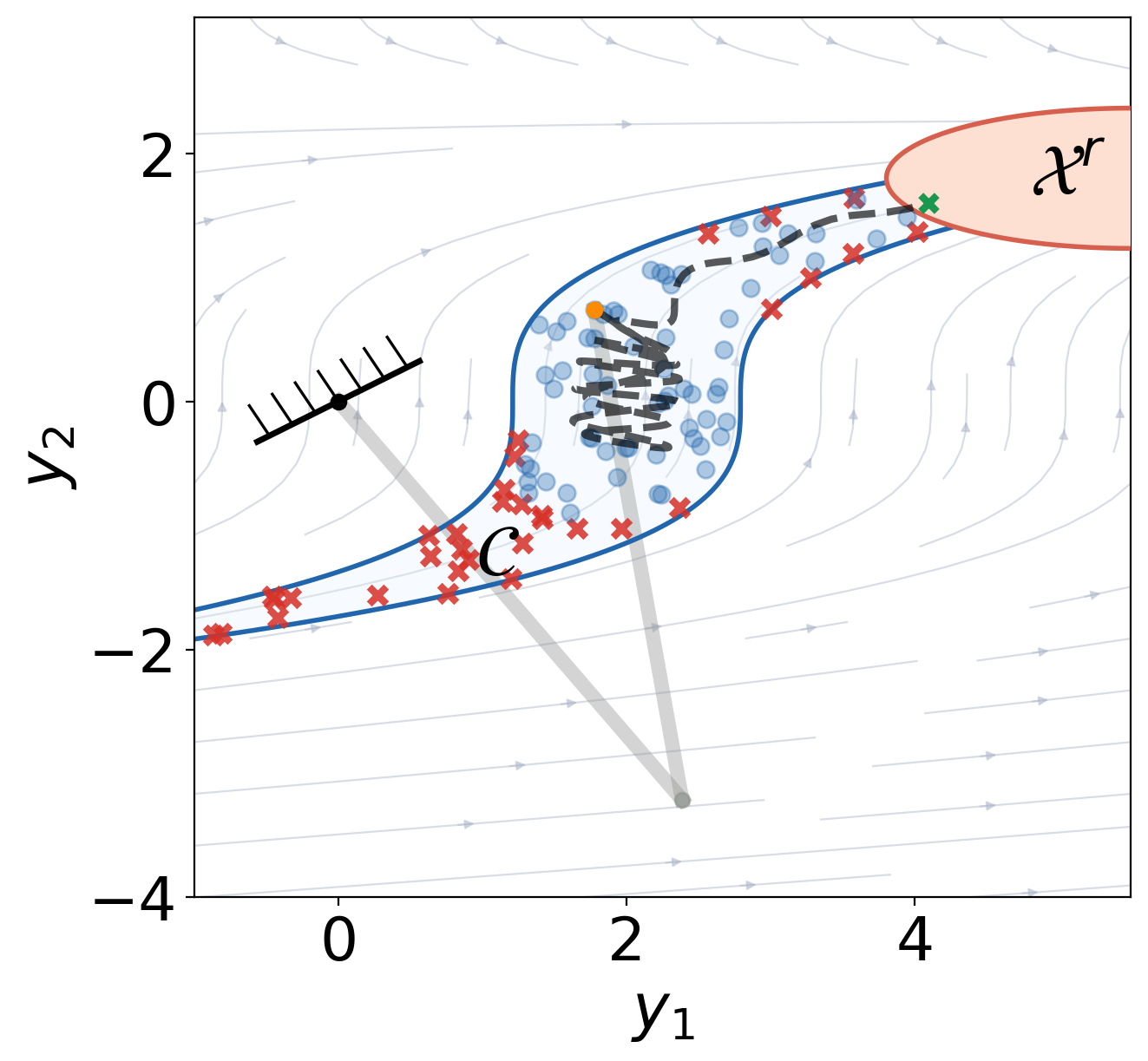}
        \caption{Unconstrained reach-avoid with  67/100.}
        \label{fig:unconstrained reach-avoid manipulator}
    \end{subfigure}
    \hfill
    \begin{subfigure}[t]{0.23\linewidth}
        \centering
        \includegraphics[width=\linewidth]{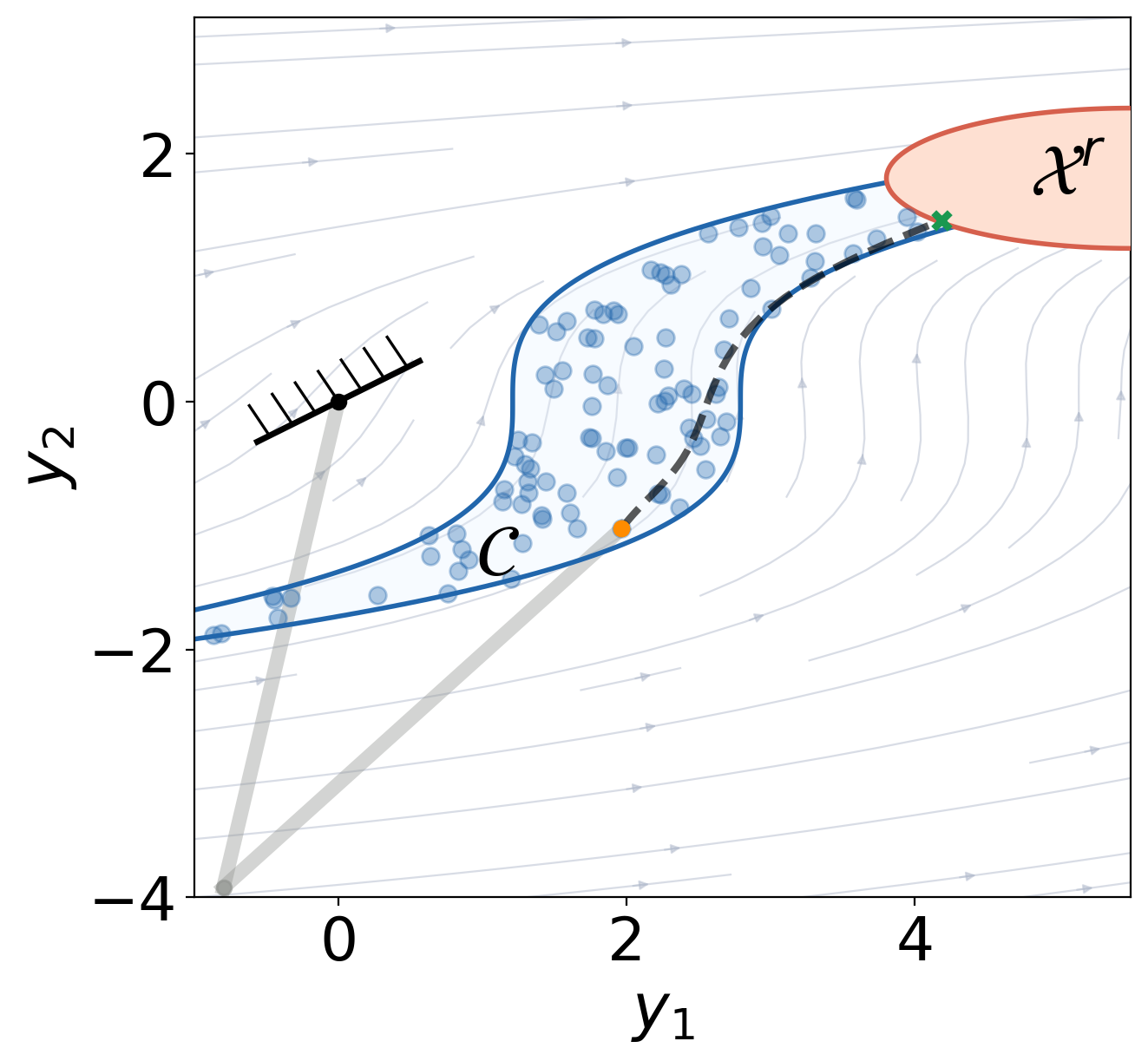}
        \caption{Our method with 100/100.}
        \label{fig:constrained reach-avoid mpc manipulator}
    \end{subfigure}
    \hfill
    \begin{subfigure}[t]{0.29\linewidth}
        \centering
        \includegraphics[width=\linewidth]{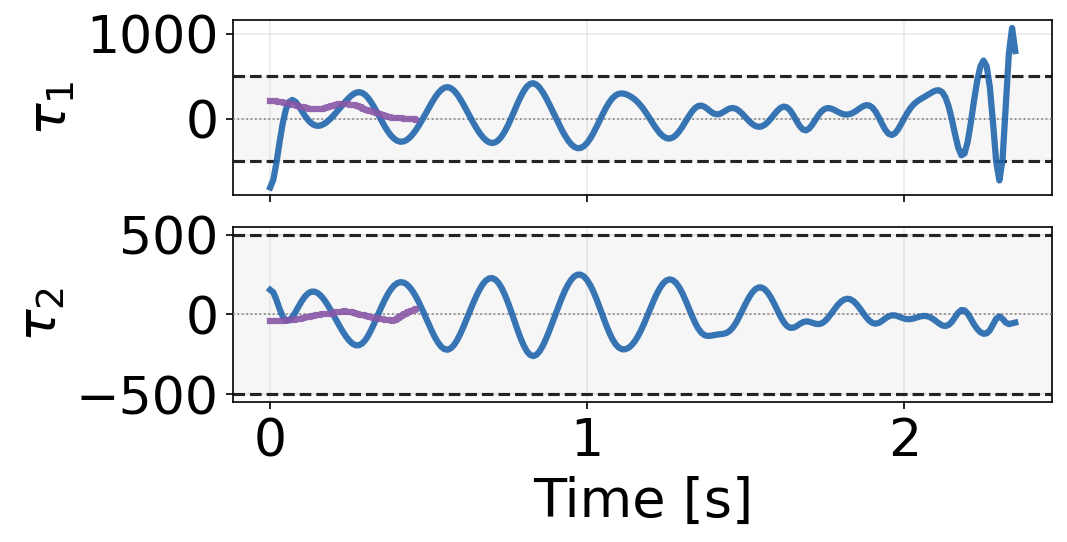}
        \caption{Control inputs comparison}
        \label{fig:control inputs comparison manipulator}
    \end{subfigure}    
    \caption{
    (\subref{fig:vanilla mpc manipulator}), (\subref{fig:unconstrained reach-avoid manipulator}) \& (\subref{fig:constrained reach-avoid mpc manipulator}) show controller success rates for $100$ random initial states. Trajectories starting from blue dots achieved the reach-avoid objective while the ones starting at the red crosses led to infeasibility. A feasible closed-loop trajectory (dashed line from orange dot to green cross) is included for visualization. (\subref{fig:control inputs comparison manipulator}) represents control inputs comparison between unconstrained reach-avoid controller (blue) and the proposed MPC controller (purple).
    }
    \label{fig:ex10}\vspace{-0.5cm}
\end{figure*}

\begin{example}
Consider the two-link planar robotic arm
\begin{equation}
    \dot{\bm{q}} =
    \left[\begin{array}{c}
    \dot{q}_1\\
    \dot{q}_2\\
    -\bm{M}^{-1}(\bm{C}\dot{\bm{q}}+\bm{G})
    \end{array}\right]+\left[\begin{array}{c}\bm{0}^{2\times2}\\\bm{M}^{-1}\end{array}\right]\left[\begin{array}{c}
    \tau_1\\
    \tau_2\end{array}\right].
\end{equation}
with $\bm{q} = [q_1,q_2,\dot{q}_1,\dot{q}_2]^\top \in \mathbb{R}^4$, and $[q_1, q_2]^\top$ as the joint angles, and $[\dot{q}_1, \dot{q}_2]^\top$ denoting the joint velocities. The inertia matrix $\bm{M}$ is defined as
$\bm{M} = \begin{bmatrix}
    M_{11} & M_{12}\\
    M_{21} & M_{22}
\end{bmatrix}$
where $M_{11} = I_1 + I_2 + m_2(l_1^2 + l_{c_2}^2 + 2l_1l_{c_2}\cos{(q_2)})+m_1l_{c_1}^2$, $M_{12} = M_{21} =  I_2 + m_2(l_{c_2}^2 + l_1l_{c_2}\cos{(q_2)})$, $M_{22} = m_2l_{c_2}^2 + I_2$.
\[
\bm{C} =
\begin{bmatrix}
- m_2 l_1 l_{c_2} \sin(q_2) \dot{q}_2 & - m_2 l_1 l_{c_2} \sin(q_2) (\dot{q}_1 + \dot{q}_2) \\
m_2 l_1 l_{c_2} \sin(q_2) \dot{q}_1 & 0
\end{bmatrix}
\] is the Coriolis matrix. The gravity vector is
\[
\bm{G} =
\begin{bmatrix}
(m_1 g l_{c_1} + m_2 g l_1) \cos(q_1) + m_2 g l_{c_2} \cos(q_1 + q_2) \\
m_2 g l_{c_2} \cos(q_1 + q_2)
\end{bmatrix}\]
 $\bm{\tau}\in\mathbb{R}^2$ is the joint torque as control inputs. Specific values of link parameters (mass, inertia, etc.) can be found in our code link.
 Given safe set $\mathcal{C}= \{ \bm{q} \in \mathbb{R}^4 \,|\, -(4(y_1(\bm{q})-2)-2y_2(\bm{q})^3)^2 + 0.8y_2(\bm{q})^3 + 10 >0 \}$ and target set $\mathcal{X}^r = \{ \bm{q} \in \mathbb{R}^4 \,|\, ((y_1(\bm{q})- 5.5)^2 / 1.2^2) + ((y_2(\bm{q}) - 1.8)^2 / 0.4^2) - 2 <0 \}$,
we aim to design a controller that enables the robotic arm to execute its motion within the specified workspace bounds, ultimately achieving predefined terminal configurations while respecting control input limits $ | \tau_{1,2} | \leq 500$. Both vanilla MPC and the proposed MPC framework with prediction horizon $N=20$ and sampling step $T=0.01$.
From Fig. \ref{fig:ex10}, as in our previous example, the reach-avoid MPC once again shows higher success rates in achieving the reach-avoid objective compared to both vanilla MPC and the unconstrained reach-avoid feedback controller, while having smoother trajectories compared to \ref{fig:unconstrained reach-avoid manipulator}. The reach-avoid MPC inputs remain bounded while the trajectories reach the target in a much shorter time, as seen from Fig. \ref{fig:constrained reach-avoid mpc manipulator}.
\end{example}

\section{Conclusion}

This paper proposed a sampled-data MPC framework for continuous control-affine systems that synthesizes controllers with rigorous reach-avoid guarantees. Under sufficiently fast sampling, the framework is shown to recursively admit feasible constrained control inputs that safely steer the system into the target set. 
Future work will focus on extending the framework to systems with external disturbances.

\bibliographystyle{ieeetr}
\bibliography{refs}

\clearpage
\section*{Appendix}

\subsection{Additional details for the proof of Lemma \ref{lemma:sampled-data system reach-avoid guarantees}}

\begin{align*}
        \left\| \bm{x}(t) - \bm{x}(t_k) \right\| \leq&
        \int_{t_k}^t 
        \left\|
        \bm{f}(\bm{x}(s)) + \sum_{j=1}^{m}\bm{g}_{j}(\bm{x}(s))u_{j}^c(t_k)
        \right\| \mathrm{d}s \\
        \leq&
        \int_{t_k}^t
        \bigg[
        \ell_{\bm{f}}
        \left\| \bm{x}(s) - \bm{x}(t_k) \right\| + \left\| \dot{\bm{x}}(t_k) \right\|\\
        & \qquad + \sum_{j=1}^m \ell_{\bm{g}_j} M_{u_j^c} \left\| \bm{x}(s) - \bm{x}(t_k) \right\|
        \bigg]
        \mathrm{d}s \\
        =& 
        \int_{t_k}^t
        \bigg[
        (\ell_{\bm{f}} + \sum_{j=1}^m \ell_{\bm{g}_j} M_{u_j^c}) \left\| \bm{x}(s) - \bm{x}(t_k) \right\| 
        \bigg] \mathrm{d}s \\
        & \qquad \qquad \qquad \qquad + (t-t_k) \left\| \dot{\bm{x}}(t_k) \right\|
\end{align*}
Let $\ell_{sys} = \ell_{\bm{f}} + \sum_{j=1}^m \ell_{\bm{g}_j} M_{u_j^c}$ and $C_k = \left\| \dot{\bm{x}}(t_k) \right\| $,
by applying the Grönwall–Bellman inequality, we obtain,
\begin{align*}
    \left\| \bm{x}(t) - \bm{x}(t_k) \right\| \leq&
    (t-t_k) C_k + 
    \int_{t_k}^t 
    (s-t_k) C_k 
    \ell_{sys} 
    e^{\ell_{sys} (t-s) } \mathrm{d}s \\
    &= (t-t_k) C_k + 
    C_k \ell_{sys}
    \int_{t_k}^t 
    (s-t_k) e^{\ell_{sys}(t-s)} \mathrm{d}s 
\end{align*}
Let $u=s-t_k, \ \mathrm{d}v = e^{-\ell_{sys}} \mathrm{d}s$, we have $\mathrm{d}u = \mathrm{d}s, \ v = -\frac{1}{\ell_{sys}} e^{-\ell_{sys}s}$, then
\begin{align*}
    \int_{t_k}^t 
    (s-t_k) e^{\ell_{sys}(t-s)} \mathrm{d}s  
    &= e^{\ell_{sys}t} 
    \left( \bigg[
    -\frac{s-t_k}{\ell_{sys}} e^{-\ell_{sys}s}
    \bigg] \Bigg|_{t_k}^t - 
    \int_{t_k}^t 
    \left[-\frac{1}{\ell_{sys}} e^{-\ell_{sys}s} \right] \mathrm{d}u
    \right) \\
    &=e^{\ell_{sys}t} 
    \left(
    -\frac{t-t_k}{\ell_{sys}} e^{-\ell_{sys}t} + 
    \frac{1}{\ell_{sys}}\left[ - \frac{1}{\ell_{sys} } e^{-\ell_{sys}s}\right] \Bigg |_{t_k}^t
    \right) \\
    &= e^{\ell_{sys}t} 
    \left(
    -\frac{t-t_k}{\ell_{sys}} e^{-\ell_{sys}t} - 
    \frac{1}{\ell_{sys}^2}\left[e^{-\ell_{sys}t} -e^{-\ell_{sys}t_k} \right] 
    \right) \\
    &=  - \frac{t-t_k}{\ell_{sys}} - \frac{1}{\ell_{sys}^2} + \frac{1}{\ell_{sys}^2} e^{\ell_{sys}(t-t_k)} \\
    &=\frac{1}{\ell_{sys}^2} \left( e^{\ell_{sys}(t-t_k)} -1 \right) - \frac{t-t_k}{\ell_{sys}}
\end{align*}
Thus,
\begin{align*}
    \left\| \bm{x}(t) - \bm{x}(t_k) \right\| \leq&
    (t-t_k) C_k + 
    C_k \ell_{sys} \left[ \frac{1}{\ell_{sys}^2} \left( e^{\ell_{sys}(t-t_k)} -1 \right) - \frac{t-t_k}{\ell_{sys}} \right] \\
    =& C_k \frac{e^{\ell_{sys}(t-t_k)} -1}{\ell_{sys}} \\
    =& \frac{e^{\ell_{sys}(t-t_k)} -1}{\ell_{sys}} \left\| \dot{\bm{x}}(t_k) \right\| \\
    =& \frac{e^{\left(\ell_{\bm{f}} + \sum_{j=1}^m \ell_{\bm{g}_j} M_{u_j^c}\right)(t-t_k)} -1}{\ell_{\bm{f}} + \sum_{j=1}^m \ell_{\bm{g}_j} M_{u_j^c}} \left\| \dot{\bm{x}}(t_k) \right\| \\
    =& \frac{e^{\left(\ell_{\bm{f}} + \sum_{j=1}^m \ell_{\bm{g}_j} M_{u_j^c}\right)(t-t_k)} -1}{\ell_{\bm{f}} + \sum_{j=1}^m \ell_{\bm{g}_j}M_{u_j^c}} \left\| \dot{\bm{x}}(t_k) \right\| \\
    \leq& \frac{e^{\left(\ell_{\bm{f}} + \sum_{j=1}^m \ell_{\bm{g}_j} M_{u_j^c}\right)T} -1}{\ell_{\bm{f}} + \sum_{j=1}^m \ell_{\bm{g}_j} M_{u_j^c}} \left\| \dot{\bm{x}}(t_k) \right\| 
\end{align*}

\end{document}